\begin{document}
\newtheorem{thm}{Theorem}[section]
\newtheorem{pro}[thm]{Proposition}
\newtheorem{cor}[thm]{Corollary}
\newtheorem{lem}[thm]{Lemma}
\newtheorem{dfn}[thm]{Definition}
\newtheorem{rem}[thm]{Remark}
\newtheorem{conj}[thm]{Conjecture}
\title{Tight Lagrangian surfaces in $S^{2} \times S^{2}$
\footnote{2000
Mathematics Subject Classification. Primary 53C40; Secondary 53C65.}}
\author{Hiroshi Iriyeh and Takashi Sakai
\footnote{The second author was partially supported
by Grant-in-Aid for Young Scientists (B) No. 20740044,
The Ministry of Education, Culture, Sports, Science and Technology, Japan.}}
\date{}
\maketitle

\begin{abstract}
We determine all tight Lagrangian surfaces in $S^2 \times S^2$.
In particular, globally tight Lagrangian surfaces in $S^2 \times S^2$
are nothing but real forms.

{\bf Key words:}
Lagrangian submanifold;
Killing nullity;
tight map;
Poincar\'e formula;
Arnold-Givental inequality;.

\end{abstract}

\section{Introduction and main results}

In 1991, Y.-G.\ Oh \cite{Oh91} introduced the notion of {\it tightness} of
closed Lagrangian submanifolds in compact Hermitian symmetric spaces.
Let $(\tilde M=G/K,\omega,J)$ be a Hermitian symmetric space of compact type
and $L$ be a closed embedded Lagrangian submanifold of $\tilde M$.
Then $L$ is said to be {\it globally tight} (resp. {\it tight}) if it
satisfies
\begin{eqnarray*}
 \#(L \cap g \cdot L)=SB(L,\mathbb{Z}_{2})
\end{eqnarray*}
for any isometry $g \in G$ (resp. close to the identity) such that
$L$ transversely intersects with $g \cdot L$.
Here $SB(L,\mathbb{Z}_{2})$ denotes the sum of $\mathbb{Z}_{2}$-Betti numbers
of $L$.

It is known that any real forms in a compact Hermitian symmetric space $G/K$
are tight.
It is a natural problem to classify all tight Lagrangian submanifolds in $G/K$.
Indeed, Oh \cite{Oh91} proved the following uniqueness theorem in 
$\mathbb{C}P^n$.
\begin{thm}[Oh] \label{thm:1-1}
Let $L$ be a closed embedded tight Lagrangian submanifold in $\mathbb{C}P^n$.
Then $L$ is the standard totally geodesic $\mathbb{R}P^n$ if $n \geq 2$
or it is the standard embedding $S^1 (\cong \mathbb{R}P^1)$ into
$S^2 (\cong \mathbb{C}P^1)$ as a latitude circle.
\end{thm}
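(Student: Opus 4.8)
The plan is to convert the tightness hypothesis into a statement about perfect Morse functions, then into tautness, after which the classification becomes a question in the geometry of taut Lagrangians. Since $\mathbb{C}P^n$ is simply connected, every isometry in the identity component $PU(n+1)$ is Hamiltonian: for $X\in\mathfrak{su}(n+1)$ the flow $g_t=\exp(tX)$ is generated by a component $\mu_X$ of the moment map for the $SU(n+1)$-action, and these $\mu_X$ exhaust (up to constants) the first eigenfunctions of the Fubini--Study Laplacian. The first point I would prove is the local intersection principle: if $\mu_X|_L$ is Morse, then for all small $t\neq 0$ one has $L\pitchfork(g_t\cdot L)$ and
\[
 \#\big(L\cap(g_t\cdot L)\big)=\#\,\mathrm{Crit}\big(\mu_X|_L\big).
\]
Indeed, by Weinstein's neighbourhood theorem $g_t\cdot L$ is, for small $t$, the graph of an exact $1$-form $\beta_t$ on $L$ with $\beta_t=t\,d(\mu_X|_L)+O(t^2)$, whose zeros converge to the critical set of $\mu_X|_L$. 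Since the Morse inequalities give $\#\,\mathrm{Crit}(\mu_X|_L)\ge SB(L,\mathbb{Z}_2)$ (this lower bound being exactly the Arnold--Givental inequality when $L$ is a real form), tightness is \emph{equivalent} to the assertion that every $\mu_X|_L$ which is Morse is a $\mathbb{Z}_2$-perfect Morse function.

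I would then reinterpret this geometrically. The map $[z]\mapsto P_{[z]}-\tfrac{1}{n+1}I$, with $P_{[z]}$ the Hermitian projector onto $\mathbb{C}z$, is the first standard minimal embedding of $\mathbb{C}P^n$ into the Euclidean space of trace-free Hermitian matrices, and its image lies in a round sphere $S^N$; under it the $\mu_X$ are exactly the restrictions of the linear coordinate functions. Because $L$ lies in a sphere, every squared distance function $|\,\cdot-p\,|^2$ restricts on $L$ to a constant plus a multiple of some $\mu_X$, so ``every Morse $\mu_X|_L$ is $\mathbb{Z}_2$-perfect'' is precisely the statement that $L$ is a $\mathbb{Z}_2$-\emph{taut} submanifold. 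Tightness is thus equivalent to tautness of $L$. From tautness I would extract the standard structural consequences: $L$ is tight in the Chern--Lashof sense and hence has the two-piece property, so it is connected, a generic $\mu_X|_L$ has a unique minimum and a unique maximum, and the $\mathbb{Z}_2$-Betti numbers of $L$ are severely constrained.

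The remaining and essential task is to identify $L$. For $n=1$ the Lagrangian condition is vacuous, so $L$ is a simple closed curve lying on $S^2=\mathbb{C}P^1\subset\mathbb{R}^3$; a tight (equivalently taut) closed curve on a sphere is a planar circle, i.e.\ a latitude circle, and this is the elementary end of the classification. For $n\ge2$ the Lagrangian condition ($\dim_{\mathbb{R}}L=n$ and $\omega|_L=0$) becomes a genuine restriction that must be combined with tautness. Here I would first use the two-piece property together with half-dimensionality to force $L$ to have the $\mathbb{Z}_2$-cohomology of $\mathbb{R}P^n$, so that $SB(L,\mathbb{Z}_2)=n+1$ and a generic $\mu_X|_L$ has exactly one critical point of each index $0,1,\dots,n$. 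I would then pass from homology to geometry by exploiting that \emph{all} first eigenfunctions are simultaneously perfect Morse: restricting the identity $\sum_\alpha\phi_\alpha^2=\mathrm{const}$ and the equivariant relation between the $\nabla^2\phi_\alpha$ and the (parallel) second fundamental form of $\mathbb{C}P^n\hookrightarrow S^N$ to $L$, the perfect-Morse property should force the second fundamental form of $L$ to vanish, so that $L$ is totally geodesic. The classification of totally geodesic Lagrangians in $\mathbb{C}P^n$ then leaves only the real form $\mathbb{R}P^n$.

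The step I expect to be the main obstacle is the rigidity for $n\ge2$: passing from ``every first eigenfunction restricts to a $\mathbb{Z}_2$-perfect Morse function'' to the conclusion that $L$ is totally geodesic, hence equal to $\mathbb{R}P^n$. Tautness alone is shared by many non-Lagrangian submanifolds, so the Lagrangian condition must be used decisively, both to pin down the $\mathbb{Z}_2$-cohomology type and, more delicately, to exclude taut Lagrangians other than the real form. Controlling the directions $X$ for which $\mu_X|_L$ degenerates, and converting the simultaneous perfectness of the whole family $\{\mu_X|_L\}$ into the vanishing of the second fundamental form, is where the real analytic work lies.
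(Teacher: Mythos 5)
A preliminary remark: the paper does not actually prove Theorem \ref{thm:1-1} --- it is quoted from Oh \cite{Oh91} --- so the only thing to measure your proposal against is the parallel argument for $S^2\times S^2$ in Section 3 (Propositions \ref{pro:3-1} and \ref{pro:3-4}), which the authors describe as a modification of Oh's method. Against that yardstick, the first half of your plan is correct and is the standard reduction: tightness is equivalent to the assertion that every Morse function of the form $\mu_X|_L$ is $\mathbb{Z}_2$-perfect, and under the first standard (projector) embedding $\mathbb{C}P^n\hookrightarrow\mathbb{E}^{n^2+2n}$ these are precisely the height functions of $L$, so the problem becomes one about tight maps into Euclidean space. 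Your treatment of $n=1$ is also fine.

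The gap is the classification for $n\ge 2$, and it is genuine on two counts. First, the two-piece property plus half-dimensionality cannot force $L$ to have the $\mathbb{Z}_2$-cohomology of $\mathbb{R}P^n$: the Clifford torus is a half-dimensional Lagrangian with $SB=2^n$, and tori are perfectly capable of being taut, so whatever excludes it must use the Lagrangian condition in a way you never specify. Second, the passage from ``all $\mu_X|_L$ are perfect'' to ``the second fundamental form vanishes'' is exactly the step you leave as a hope (``should force''), with no argument offered. The idea that closes both gaps at once --- and is the engine both of Oh's proof and of Section 3 of this paper --- is dimensional rather than analytic. Kuiper's Theorem \ref{thm:Kuiper} bounds the substantial dimension of a tight map $L^n\to\mathbb{E}^N$ by $N\le n(n+3)/2$, while the Lagrangian condition supplies a matching lower bound on the substantial dimension of $\phi=(\mu_{X_1}|_L,\ldots)$: since $d(\mu_X|_L)=g(JX,\iota_*(\cdot))$ and $J(T_pL)=N_pL$, the function $\mu_X|_L$ is constant iff $X$ is everywhere tangent to $L$; and because a holomorphic isometry fixing $L$ pointwise fixes $TL\oplus J(TL)=T\mathbb{C}P^n|_L$ and hence is the identity, the tangent Killing fields inject into $\mathfrak{i}(L)$, giving substantial dimension at least $(n^2+2n)-\tfrac{1}{2}n(n+1)=\tfrac{1}{2}n(n+3)$. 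Equality must therefore hold in Kuiper's inequality, and the rigidity statement of Theorem \ref{thm:Kuiper} (Kuiper for $n=2$, Little--Pohl in general) identifies $L$ with $\mathbb{R}P^n$ and $\phi$ with the Veronese embedding, i.e.\ $L$ with the totally geodesic real form. Without this input, or an actual proof of your two ``should'' steps, the proposal remains an outline rather than a proof.
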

And he posed the following problem:

\bigskip

\noindent
{\bf Problem (Oh).}\ Classify all possible tight Lagrangian submanifolds
in other Hermitian symmetric spaces.
Are the real forms on them the only possible tight Lagrangian submanifolds?

\bigskip

In this paper we give the complete solution of it in the case of
$S^2 \times S^2$.
Note that the following is the first result for the above problem except
the case of $\mathbb{C}P^n$.
\begin{thm} \label{thm:1-2}
Let $L$ be a closed embedded tight Lagrangian surface in 
$(S^2 \times S^2,\omega_{0} \oplus \omega_{0})$, where $\omega_{0}$ denotes
the standard K\"{a}hler form of $S^2(1) \cong {\mathbb C}P^1$.
Then $L$ must be one of the following cases:\newline
$\mathrm{(i)}$\ the totally geodesic Lagrangian sphere
$$ L= \{ (x, -x) \in S^2 \times S^2 \ | \ x \in S^2 \}. $$
$\mathrm{(ii)}$\ a product of latitude circles $S^1(a) \subset S^2$, i.e.,
$$ L=S^1(a) \times S^1(b) \subset S^2 \times S^2, $$
where $S^1(a)$ stands for the round circle with radius $a \ (0 < a \leq 1)$. 
\end{thm}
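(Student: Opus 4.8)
The plan is to reinterpret tightness as tightness of an ordinary Euclidean immersion, and then to invoke the classical structure theory for tight surfaces after first pinning down the topological type of $L$. Since $G=SO(3)\times SO(3)$ acts on $S^2\times S^2$ Hamiltonianly with moment map the inclusion $\mu\colon S^2\times S^2\hookrightarrow\mathbb R^3\times\mathbb R^3=\mathbb R^6$, each Killing field $X\in\mathfrak g$ satisfies $\iota_X\omega=d\mu_X$ with $\mu_X=\langle\mu,X\rangle$ a linear height function. For $g_t=\exp(tX)$ the points of $L\cap g_tL$ correspond to first order to the zeros of $(\iota_X\omega)|_L=d(\mu_X|_L)$, that is, to the critical points of $h_\xi|_L$ with $\xi=X\in\mathbb R^6$; transversality is Morseness of $h_\xi|_L$, and then $\#(L\cap g_tL)=\#\mathrm{Crit}(h_\xi|_L)$ for small $t$. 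As $X$ runs over $\mathfrak g$ the direction $\xi$ runs over all of $\mathbb R^6$, so $L$ is a tight Lagrangian surface precisely when every nondegenerate height function of $L\subset\mathbb R^6$ attains the Morse minimum $SB(L,\mathbb Z_2)$ of critical points, i.e. $L$ is a tight surface in $\mathbb R^6$ (equivalently, $L$ has the two-piece property). This is the meaning of ``tight map'', and the Killing nullity measures the degenerate directions, which are negligible by Sard.

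Next I would restrict the topological type. On a Lagrangian $L$ one has $\pi_1^*\omega_0=-\pi_2^*\omega_0$, so integrating gives $\deg\pi_1=-\deg\pi_2$; writing $[L]=\alpha A+\beta B$ with $A=[S^2\times\mathrm{pt}]$, $B=[\mathrm{pt}\times S^2]$, this says $[L]=\alpha(A-B)$, whence $\chi(L)=-[L]\cdot[L]=2\alpha^2\ge0$. Thus an orientable tight Lagrangian surface is a sphere ($\alpha=\pm1$, $[L]=\pm(A-B)$) or a torus ($\alpha=0$, $[L]=0$). Non-orientable types are excluded: odd-$\chi$ ones such as $\mathbb{RP}^2$ by the mod-$2$ self-intersection obstruction (every $\mathbb Z_2$-class of $S^2\times S^2$ has even self-intersection while $[L]\cdot[L]\equiv\chi(L)\pmod2$), and the Klein bottle by the known non-existence of a Lagrangian Klein bottle in $S^2\times S^2$.

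For the sphere, tightness means minimal total absolute curvature, so the Chern--Lashof theorem forces $L$ to be a convex surface in an affine $3$-plane $P\subset\mathbb R^6$. On $P\cong\mathbb R^3$ the quadratics $q_i=|\mathrm{pr}_i|^2$ both equal $1$ on $L$, and since the convex $2$-sphere $L$ affinely spans $P$ the two quadrics $\{q_i=1\}$ must coincide with $L$; this forces $P$ to be the graph of a scalar multiple of an orthogonal map $T$ and $L=\{(x,Tx)\}$. The Lagrangian condition holds iff $T$ is orientation-reversing ($\det T=-1$), and all such graphs are congruent under $G$ to the antidiagonal $\{(x,-x)\}$, giving case (i). This step is clean.

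The torus case is the main obstacle. Here $[L]=0$, so each $\pi_i$ has degree zero and folds. I would use the two-piece property in the two coordinate families $\xi=(\xi_1,0)$ and $\xi=(0,\xi_2)$: the sublevel sets of $\langle\pi_i(\cdot),\xi_i\rangle$ are the preimages $\pi_i^{-1}(\text{cap})$, so tightness says $\pi_i^{-1}(C)$ is connected for every spherical cap $C$. Coupling this with the fact that a perfect Morse height function on $T^2$ has exactly one maximum, one minimum and two saddles, I expect to show that each image $\pi_i(L)$ is a round latitude circle $S^1(a_i)$, that $\pi_i$ collapses the complementary factor, and hence that $L=S^1(a)\times S^1(b)$; conversely a product of two planar convex curves is tight in $\mathbb R^4\subset\mathbb R^6$, so all of these are indeed tight, giving case (ii). Finally, that the globally tight examples are exactly the real forms follows by combining the integral-geometric Poincar\'e formula $\int_G\#(L\cap g\cdot L)\,dg=c\,\mathrm{Vol}(L)^2$ (forcing the product $ab$ to be critical) with a direct count of $\#(L\cap g\cdot L)$ for isometries $g$ far from the identity, which exceeds $SB(L,\mathbb Z_2)$ unless $a=b=1$; the antidiagonal sphere and $\mathbb{RP}^1\times\mathbb{RP}^1$ are precisely the real forms, the Arnold--Givental inequality guaranteeing their tightness.
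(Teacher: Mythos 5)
Your opening reduction is correct and is in fact the same device the paper uses: the Hamiltonians of the Killing fields are the restrictions of linear functions under the moment map $S^2\times S^2\hookrightarrow\mathbb{R}^6$, the identity $d(h_\xi|_L)=\omega(X^{NL},\iota_*\cdot)$ converts transverse intersections $L\cap\exp(tX)L$ for small $t$ into nondegenerate critical points of height functions, and the Killing nullity is the dimension of the affine span. Your sphere case via Chern--Lashof is a legitimately different route from the paper's (which instead characterizes the equality case $\dim\mathrm{Im}(\Psi_2|_{\mathfrak h})=1$ of Gotoh's inequality as ``$L$ is $J_0\oplus(-J_0)$-complex'' and then quotes Castro--Urbano's stability theorem), and modulo some unchecked details about the quadrics $q_i|_P$ it would work. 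But the proposal does not prove the theorem, for two reasons. First and most seriously, case (ii) --- the torus --- is only a statement of intent (``I expect to show that each image $\pi_i(L)$ is a round latitude circle''). There is no classification of smooth tight tori substantially in $\mathbb{R}^4$ to invoke, and extracting roundness of the two projections from the two-piece property plus the constraint $L\subset S^2\times S^2$ is precisely the hard step. The paper avoids it entirely: when $\mathrm{nul}_K(L)=4$ the \emph{first} inequality in Gotoh's estimate $\mathrm{nul}_K(L)\ge\mathrm{codim}(L)+\dim\mathrm{Im}(\Psi_2|_{\mathfrak h})$ becomes an equality, and Gotoh's equality condition forces $L$ to be an orbit of a closed subgroup of $G$; the Ma--Ohnita classification of homogeneous Lagrangian surfaces then yields $T_{a,b}$. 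This homogeneity mechanism is the key idea you are missing, and without it (or a genuine substitute) the torus case is open in your write-up.

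Second, your preliminary topological classification is incomplete. The homology argument correctly pins orientable $L$ to $S^2$ or $T^2$ and kills odd-$\chi$ nonorientable surfaces, but nonorientable surfaces with $\chi\le -4$, $\chi\equiv 0\pmod 4$, do admit Lagrangian embeddings into $\mathbb{R}^4$ (Givental'), hence into a Darboux ball in $S^2\times S^2$, and nothing in your proposal excludes them; for the Klein bottle you lean on the Shevchishin--Nemirovski nonexistence theorem, a much heavier input than anything the paper uses. The paper never enumerates topological types: it stratifies by Killing nullity, rules out nullity $6$ and $5$ by Kuiper's bound $N\le\frac{1}{2}n(n+3)=5$ together with its equality case (Veronese $\mathbb{R}P^2$, excluded by the parity of $\chi$), and handles nullity $3$ and $4$ as above, so the topological type emerges as output rather than being required as input. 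If you want to salvage your route, you must either exclude all remaining topological types or, better, reinstate the nullity stratification and supply the homogeneity argument for the nullity-$4$ stratum.
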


\begin{cor} \label{cor:1-3}
Let $L$ be a closed embedded globally tight Lagrangian surface in 
$(S^2 \times S^2,\omega_{0} \oplus \omega_{0})$.
Then $L$ must be one of the following two cases:\newline
$\mathrm{(i)}$\ the totally geodesic Lagrangian sphere
$$ L= \{ (x, -x) \in S^2 \times S^2 \ | \ x \in S^2 \}. $$
$\mathrm{(ii)}$\ the product of equators (totally geodesic Lagrangian torus)
$$ L=S^1(1) \times S^1(1) \subset S^2 \times S^2. $$
\end{cor}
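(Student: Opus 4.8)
\section*{Proof proposal for Corollary \ref{cor:1-3}}

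The plan is to derive the Corollary directly from Theorem \ref{thm:1-2}. Since a globally tight surface is in particular tight, Theorem \ref{thm:1-2} already forces $L$ to be either the antidiagonal sphere $(\mathrm{i})$ or a product of latitude circles $S^1(a)\times S^1(b)$. Recording that $SB(S^1\times S^1,\mathbb{Z}_2)=4$ while $SB(S^2,\mathbb{Z}_2)=2$, it therefore remains only to show that among the product tori exactly the product of equators $S^1(1)\times S^1(1)$ can be globally tight; the surface $(\mathrm{i})$ then survives unchanged in the list. Because global tightness demands the equality $\#(L\cap g\cdot L)=SB(L,\mathbb{Z}_2)$ for \emph{every} isometry $g\in G$ with transverse intersection, to eliminate a torus it suffices to produce a single violating $g$, and I will look for one inside $SO(3)\times SO(3)\subset G$.

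The key observation is elementary: a round circle on $S^2$ is the intersection of $S^2$ with an affine plane, the latitude circle $S^1(a)$ lying in the plane $\{x_3=\sqrt{1-a^2}\}$. When $a<1$ this plane is at positive distance from the origin, so rotating the first factor by the angle $\pi$ about the $x_1$-axis carries $S^1(a)$ to the latitude circle in the parallel plane $\{x_3=-\sqrt{1-a^2}\}$, which is disjoint from $S^1(a)$. Taking $g=(g_1,\mathrm{id})$ with $g_1$ this rotation gives $L\cap g\cdot L=\emptyset$, a vacuously transverse intersection with $\#(L\cap g\cdot L)=0\neq 4$; hence no torus with $a<1$ is globally tight, and by symmetry the same holds when $b<1$. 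Conversely, when $a=b=1$ each factor is a great circle, and any two distinct great circles meet transversally in exactly two antipodal points, so $\#(L\cap g\cdot L)=2\cdot 2=4=SB(L,\mathbb{Z}_2)$ for every admissible $g\in SO(3)\times SO(3)$; the factor-interchanging component of $G$ sends a product of great circles to a product of great circles and so is handled identically. Thus the product of equators is globally tight, while no other product torus is.

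Conceptually the disjointing construction succeeds precisely because $H^1(S^1\times S^1;\mathbb{R})\neq 0$: the rotation sweeps out positive area and so induces a Lagrangian isotopy with nonzero flux, for which no Arnold--Givental lower bound on the number of intersection points is available. This is exactly what fails for the antidiagonal sphere, where $H^1(L;\mathbb{R})=0$ forces every image $g\cdot L$ to be Hamiltonian isotopic to $L$; the Arnold--Givental inequality then gives $\#(L\cap g\cdot L)\ge SB(L,\mathbb{Z}_2)=2$, and combining this with the tightness of $L$ as a real form yields equality for all $g$, so surface $(\mathrm{i})$ is indeed globally tight. Both survivors are in fact the two real forms of $S^2\times S^2$, in agreement with the abstract. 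I do not expect a serious obstacle here, since everything reduces to counting intersections of circles on $S^2$; the only point requiring care is checking the candidate surfaces against \emph{all} of $G$, in particular the component exchanging the two spheres, which however preserves the relevant classes of configurations and so poses no genuine difficulty.
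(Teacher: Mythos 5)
Your reduction to Theorem \ref{thm:1-2} and your treatment of the tori are correct and essentially what the paper does: a product $S^1(a)\times S^1(b)$ with $a<1$ is displaced off itself by a half-turn of the first factor, giving an empty (hence vacuously transverse) intersection with $0\neq 4=SB(S^1\times S^1,\mathbb{Z}_2)$, while for the product of equators every transverse image meets $L$ in exactly $2\times 2=4$ points. The paper records this part as clear and does not spell it out.

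The genuine gap is in your case (i), the global tightness of $\mathbf{M}_0$. The Arnold--Givental inequality (Theorem \ref{thm:Oh}) gives only the lower bound $\#(\mathbf{M}_0\cap g\mathbf{M}_0)\ge SB(\mathbf{M}_0,\mathbb{Z}_2)=2$; global tightness requires the matching \emph{upper} bound for every transverse $g$, and ``combining this with the tightness of $L$ as a real form'' does not supply it --- tightness is an equality only for $g$ near the identity and says nothing about distant $g$. This is exactly why the paper devotes all of Section 7 to Theorem \ref{thm:main}: using Howard's Poincar\'e formula (Theorem \ref{th:H}) it computes $\int_G\#(\mathbf{M}_0\cap g\mathbf{M}_0)\,d\mu(g)=128\pi^4=SB(\mathbf{M}_0,\mathbb{Z}_2)\cdot\mathrm{vol}(G)$, so that the Arnold--Givental lower bound forces equality for almost every $g$, any $g_0$ with excess intersection contributing a strict surplus to the integral on a neighborhood. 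You must either reproduce this integral-geometric step or replace it by a direct count, which in this case is available: writing $g=(g_1,g_2)$, the points of $\mathbf{M}_0\cap g\mathbf{M}_0$ are the $(x,-x)$ with $x$ fixed by the rotation $h:=g_2g_1^{-1}\in SO(3)$; a nontrivial rotation of $S^2$ has exactly two fixed points and acts on the tangent planes there without nonzero fixed vectors, so the intersection is transverse with exactly two points whenever $h\neq\mathrm{id}$, and is non-transverse when $h=\mathrm{id}$. As written, your proof of case (i) is incomplete.
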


As Oh\cite[p.\ 409]{Oh91} pointed out, the global tightness is closely
related with the Hamiltonian volume minimization problem.
In fact, all globally tight Lagrangian submanifolds which are listed
in Theorem \ref{thm:1-1} and Corollary \ref{cor:1-3} are Hamiltonian volume
minimizing\footnote{The Lagrangian surface $\mathrm{(i)}$ in
Corollary \ref{cor:1-3} is actually homologically volume minimizing.}
(see \cite{Oh90,IOS}).

Our strategy of the proof of the main result (Theorem \ref{thm:1-2}) is to
classify all tight Lagrangian surfaces by their Killing nullities in
$S^2 \times S^2$.
Possible Killing nullities of Lagrangian surfaces in $S^2 \times S^2$
are $3,4,5$ and $6$.
In Section 3, we shall show that it is impossible for a tight Lagrangian
surface $L$ to have $6$ or $5$ as the Killing nullity using the theory of
tight maps into Euclidean spaces. This part is a modification of Oh's
method used in the case of $\mathbb{C}P^n$ (see \cite[Theorem 4.4]{Oh91}).
But, in our case, we essentially use the equality condition of Kuiper's
inequality (see Theorem \ref{thm:Kuiper}) in the case where the Killing
nullity of $L$ is $5$ (Proposition \ref{pro:3-4}).

The latter part of the paper is devoted to the determination of Lagrangian
surfaces in $S^2 \times S^2$ with low Killing nullities.
In Section 4, first of all, we explain basic inequality obtained by
Gotoh \cite{Gotoh99}, which gives a lower bound of the Killing
nullity of any submanifold in compact symmetric
spaces (see Theorem \ref{thm:Killing nullity}).
In Section 5, we shall prove a sharp estimate of the lower bound in Gotoh's
inequality in the case of Lagrangian surfaces in
$S^2 \times S^2$ (Proposition \ref{pro:inequality of the Killing nullity}).
This formula enables us to determine all the Lagrangian surfaces with
low Killing nullities.
In the last section, all the Lagrangian surfaces with Killing nullities
3 or 4 are completely determined.
Our argument is based on Gotoh's inequality, the above mentioned estimate
and recent developments concerning Lagrangian
surfaces in $S^2 \times S^2$ (see \cite{CU, MO}).
In particular, Gotoh's inequality is used effectively in this context.

\section{Preliminaries}
\setcounter{equation}{0}

Let $(\tilde M,\omega)$ be a closed symplectic manifold.
Let $L$ be a manifold of dimension $\frac{1}{2}\mathrm{dim}\tilde M$.
In this paper all manifolds, maps, etc.\ are supposed to be of class
$C^{\infty}$.
An embedding $\iota:L \to \tilde M$ is said to be {\it Lagrangian}
if $\iota^{*}\omega = 0$.
The image $\iota(L)$ is called (embedded) {\it Lagrangian submanifold} of
$\tilde M$.
Wherever possible, we denote $\iota(L)$ by $L$.
In this paper, we only consider a special class of symplectic manifolds,
i.e., K\"{a}hler manifolds.
If $J$ is the associated complex structure on $(\tilde M,\omega)$,
then the metric $g$ and $\omega$ have the relation $g(X,Y)=\omega(X,JY)$.
Then $\iota:L \to \tilde M$ is Lagrangian if and only if
\begin{eqnarray*}
 T_{\iota(p)}\tilde M = \iota_{*}T_{p}L \oplus J(\iota_{*}T_{p}L)
\end{eqnarray*}
for any $p \in L$ as an orthogonal direct sum.

\bigskip

Let us introduce the notion of tightness of Lagrangian submanifolds.
Although Oh considered the case of Hermitian symmetric spaces in \cite{Oh91},
its definition is valid for, more generally, homogeneous K\"{a}hler manifolds.
\begin{dfn} \label{dfn:tightness} \rm
Let $(\tilde M,\omega,J)$ be a homogeneous K\"{a}hler manifold and $L$ be a
Lagrangian submanifold of $\tilde M$.
Then $L$ is said to be {\it globally tight} (resp. {\it tight}) if
 \begin{eqnarray*}
  \#(L \cap g \cdot L)=SB(L,\mathbb{Z}_{2})
 \end{eqnarray*}
for any holomorphic isometry $g$ (resp. close to the identity) such that $L$
transversely
intersects with $g \cdot L$.
\end{dfn}
One of the important tools to study the tightness of Lagrangian submanifolds
is the theory of tight maps into Euclidean spaces.
We recall some necessary definitions and results for our discussion
in the following sections.

\bigskip

Let $M^n$ be a closed $n$-dimensional manifold.
\begin{dfn} \rm
A nondegenerate function $f$ on $M^n$ is said to be {\it tight}
(or {\it perfect}) if it has the minimal number of critical points:
 \begin{eqnarray*}
  \#\mathrm{Crit}(f)=SB(M,\mathbb{Z}_{2}),
 \end{eqnarray*}
where $\# \mathrm{Crit}(f)$ denotes the number of critical points of $f$.
\end{dfn}
\begin{rem} \rm
By Morse theory, for any nondegenerate function $f \in C^{\infty}(M^n)$
we have
 \begin{eqnarray*}
  \#\mathrm{Crit}(f) \geq SB(M,\mathbb{Z}_{2}).
 \end{eqnarray*}
\end{rem}

\begin{dfn} \rm
A map $\phi:M^n \to \mathbb{E}^N$ from a closed manifold $M^n$ to the
$N$-dimensional Euclidean space $(\mathbb{E}^N, \langle \cdot,\cdot \rangle)$
with the standard inner metric $\langle \cdot,\cdot \rangle$ is said to be
{\it tight} if the functions $z \circ \phi$
are tight for all unit vectors $z^* \in S^{N-1}$ such that $z \circ \phi$
is nondegenerate, where $z$ is the linear function dual to $z^*$:
 \begin{eqnarray*}
  (z \circ \phi)(x) := \langle \phi(x), z^* \rangle \qquad (x \in M^n).
 \end{eqnarray*}
\end{dfn}
\begin{rem} \rm
Sard's theorem says that the function $z \circ \phi$ is nondegenerate for
almost all $z^* \in S^{N-1}$.
\end{rem}

\begin{dfn} \rm
A map $\phi:M^n \to \mathbb{E}^N$ is said to be {\it substantial}
(or {\it full}) if the image of $\phi$ is not contained in any hyperplane of
$\mathbb{E}^N$.
\end{dfn}

The following inequality by Kuiper \cite[Theorem 3A]{Kuiper70} will be used 
to prove the nonexistence result of tight Lagrangian surfaces in
$S^2 \times S^2$ with Killing nullity $6$ or $5$.

\begin{thm}[Kuiper \cite{Kuiper70,Kuiper62}, Little-Pohl \cite{LP}]
 \label{thm:Kuiper}
Let $M^n$ be a closed $n$-dimensional manifold.
If $\phi:M^n \to \mathbb{E}^N$ is a tight smooth map substantially into
$\mathbb{E}^N$, then
 \begin{eqnarray} \label{eq:2-1}
  N \leq \frac{1}{2} n(n+3).
 \end{eqnarray}
Moreover, the equality is only obtained if $M=\mathbb{R}P^n$,
the $n$-dimensional real projective space and the image $\phi(M)$ is
the Veronese manifold (unique up to projective transformation) of
$\mathbb{E}^N$.
\end{thm}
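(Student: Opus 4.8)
The plan is to bound the substantial dimension $N$ by the dimension of the second osculating space of $\phi$ at a vertex of the convex hull of $\phi(M^n)$, and to extract the equality case from the rigidity that is forced when that bound is saturated. By Sard's theorem and the definition of tightness, for almost every unit vector $z^* \in S^{N-1}$ the height function $z \circ \phi$ is a nondegenerate function with exactly $SB(M,\mathbb{Z}_2)$ critical points; since $M$ is connected, each such $z\circ\phi$ has a single index-$0$ critical point. First I would choose $z^*$ so that its supporting hyperplane meets the convex hull at an extreme point and so that $z\circ\phi$ is nondegenerate, and let $p\in M$ realise the global minimum. Then $z$ is orthogonal to the image of $d\phi_p$ and the Hessian $z\cdot D^2\phi_p$ is positive definite.

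The core of the argument is a pointwise dimension count. Normalising $\phi(p)=0$ and fixing local coordinates, consider the second osculating space
\begin{eqnarray*}
 O_2(p)=\mathrm{span}\bigl\{\, \partial_i\phi(p),\ \partial_i\partial_j\phi(p)\ :\ 1\le i\le j\le n \,\bigr\}\subset\mathbb{E}^N ,
\end{eqnarray*}
which is independent of the chosen coordinates. The $n$ first derivatives and the $\tfrac{1}{2}n(n+1)$ symmetric second derivatives span a space of dimension at most $n+\tfrac{1}{2}n(n+1)=\tfrac{1}{2}n(n+3)$. The key claim, which immediately yields (\ref{eq:2-1}), is that tightness forces $\phi(M)\subseteq O_2(p)$: granting this, substantiality gives $O_2(p)=\mathbb{E}^N$ and hence $N\le\tfrac{1}{2}n(n+3)$.

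To establish the containment I would argue by contradiction. If $W:=O_2(p)^{\perp}\neq 0$, choose a unit $w\in W$ and set $g=\langle\phi,w\rangle$; then $g(p)=0$, $dg_p=0$, and $\mathrm{Hess}(g)_p=0$, so $p$ is a critical point of $g$ whose Hessian vanishes identically. Studying the perturbed directions $z^*+tw$ and invoking the topological form of tightness --- the injectivity of $H_*(\phi^{-1}(h);\mathbb{Z}_2)\to H_*(M;\mathbb{Z}_2)$ for half-spaces $h$, equivalently the two-piece property --- I would show that such an over-flat critical direction forces nearby nondegenerate height functions to acquire critical points beyond the minimal number, contradicting tightness. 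I expect this to be the main obstacle, since it is precisely where the global homological content of tightness, rather than the pointwise differential geometry, has to be brought to bear.

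For the equality case I would run the count in reverse. When $N=\tfrac{1}{2}n(n+3)$ the estimate is saturated at every such vertex, so there $d\phi_p$ has rank $n$ and the second fundamental form $\mathrm{II}_p\colon S^2T_pM\to\nu_pM$ is an isomorphism onto the full normal space of dimension $\tfrac{1}{2}n(n+1)$. Since the admissible support directions form an open set, this fullness of $\mathrm{II}$ propagates to all of $M$, which is the defining geometric feature of the Veronese immersion. Comparing $SB(M,\mathbb{Z}_2)$ with the value $n+1$ forced on a perfect Morse function then identifies the $\mathbb{Z}_2$-cohomology of $M$ with that of $\mathbb{R}P^n$, and the rigidity of immersions carrying this extremal second fundamental form (the Little--Pohl input) pins $\phi(M)$ down as the Veronese manifold, unique up to projective transformation. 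This last rigidity is delicate, but it is forced once the fullness of $\mathrm{II}$ and the topological type of $M$ are in hand.
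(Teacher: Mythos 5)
The paper does not prove Theorem \ref{thm:Kuiper} at all: it is quoted verbatim from Kuiper \cite{Kuiper70,Kuiper62} and Little--Pohl \cite{LP} and used as a black box in Section 3, so there is no in-paper argument to compare yours against. Judged as a reconstruction of the classical proof, your architecture for the inequality is the right one (it is Kuiper's): tightness forces $\phi(M)\subseteq \phi(p)+O_2(p)$ at a point $p$ admitting a nondegenerate minimum direction, and substantiality then gives $N\le\dim O_2(p)\le n+\tfrac12 n(n+1)=\tfrac12 n(n+3)$. But the step you flag as ``the main obstacle'' is left as a claim, and as phrased it does not quite close. The correct mechanism is this: if $0\neq w\perp O_2(p)$ and $z$ is a direction with $d(z\circ\phi)_p=0$ and positive definite Hessian, then for \emph{every} $t\in\mathbb{R}$ one has $d((z+tw)\circ\phi)_p=0$ and $\mathrm{Hess}((z+tw)\circ\phi)_p=\mathrm{Hess}(z\circ\phi)_p>0$, so $p$ is a nondegenerate local minimum of $(z+tw)\circ\phi$ for all $t$. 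If it were not the global minimum for some $t$, a small Morse perturbation would produce a nondegenerate height function with at least two critical points of index $0$, i.e.\ $\mu_0>\beta_0$, contradicting tightness --- this is exactly your ``extra critical points'' remark, but it is the device for upgrading \emph{local} to \emph{global} minimum, not the final contradiction. The final contradiction comes from dividing $(z+tw)(\phi(x))\ge (z+tw)(\phi(p))$ by $t$ and letting $t\to\pm\infty$, which gives $w\circ\phi\equiv w(\phi(p))$, so $\phi(M)$ lies in a hyperplane, contradicting \emph{substantiality}. Note that $t$ must range over all of $\mathbb{R}$; restricting to small perturbations $z^*+tw$ only confines $\phi(M)$ to a wedge and yields nothing.

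For the equality case your paragraph is a pointer to the literature rather than a proof: the assertions that fullness of the second fundamental form at one support point ``propagates to all of $M$,'' that $SB(M,\mathbb{Z}_2)=n+1$ is forced, and that an immersion with everywhere-full $\mathrm{II}$ and the $\mathbb{Z}_2$-cohomology of $\mathbb{R}P^n$ must be the Veronese manifold are precisely the hard content of Little--Pohl \cite{LP} (and of Kuiper \cite{Kuiper62} for $n=2$, which is the case this paper actually invokes in Proposition \ref{pro:3-4}). Citing those results is legitimate for a classical theorem, but you should say explicitly that this part is being outsourced rather than derived.
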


Note that the equality condition above was obtained by Kuiper \cite{Kuiper62}
for surfaces, $n=2$, and by Little and Pohl \cite{LP}
for $n$-manifolds in general.
We will use the equality condition for the case of surfaces essentially
in Section 3.

\bigskip

At the end of this section, we review the definition of the Killing nullity.
Let $\tilde M$ be a Riemannian manifold and $M$ be a submanifold in $\tilde M$.
Let $\mathfrak{i}(\tilde M)$ be the Lie algebra consisting of all Killing
vector fields of $\tilde M$. Consider the following vector space
 \begin{eqnarray*}
  \mathfrak{i}(\tilde M)^{NM}
  := \{ Z^{NM} \in \Gamma(NM) \ | \ Z \in \mathfrak{i}(\tilde M) \},
 \end{eqnarray*}
where $NM$ denotes the normal bundle of $M$ and $Z^{NM}$ indicates the normal
component of a vector field $Z$ of $\tilde M$.
The dimension of $\mathfrak{i}(\tilde M)^{NM}$ is called the
{\it Killing nullity} of $M$ and denoted by $\mathrm{nul}_K(M)$.
For any $p \in M$, we consider a linear map
\begin{eqnarray*}
 \Phi_p : \mathfrak{i}(\tilde M)^{NM} \longrightarrow
  N_{p}M \oplus \mathrm{Hom}(T_{p}M,N_{p}M)
\end{eqnarray*}
defined by
\begin{eqnarray*}
 \Phi_p(Z^{NM}) := (Z_{p}^{NM}, \nabla^{NM} Z^{NM}),
\end{eqnarray*}
where $\nabla^{NM}$ denotes the normal connection of the normal bundle $NM$.
By definition of $\Phi_p$, we have
\begin{eqnarray*}
  \mathrm{nul}_K(M) \geq \dim \mathrm{Im}\Phi_p.
\end{eqnarray*}
As we explain in Section 4, this estimate can be described in terms of
Lie algebra in the case where $\tilde M$ is a compact Riemannian symmetric
space.

\section{Nonexistence of tight Lagrangian surfaces with large Killing nullities}
\setcounter{equation}{0}

Let $G$ be the identity component of the full isometry group of
$S^2 \times S^2$, that is, $G=SO(3) \times SO(3)$.
Then the isotropy group $K$ at $o = (p_1, p_2)$ in $S^2 \times S^2$
is isomorphic to $SO(2) \times SO(2)$,
and $S^2 \times S^2$ is expressed as a coset space $G/K$.
Assume that $G$ is equipped with an invariant metric normalized so that
$G/K$ becomes isometric to the product of unit spheres.

The vector space of all Killing vector fields on $S^2 \times S^2$ is
denoted by $\mathfrak{i}(G/K)$, which is isomorphic to the Lie algebra
$\mathfrak{g}=\mathfrak{so}(3) \oplus \mathfrak{so}(3)$ of $G$.
Let $\iota : L \to S^2 \times S^2$ be a Lagrangian embedding of a closed
surface $L$.
Let us consider the Killing nullity of $L$:
$\mathrm{nul}_K(L)=\dim_{\mathbb R} \mathfrak{i}(G/K)^{NL}$.
Since the dimension of $G$ is $6$, we have $\mathrm{nul}_K(L) \leq 6$.

\begin{pro} \label{pro:3-1}
If $\mathrm{nul}_K(L)=6$, then the closed embedded Lagrangian surface $L$ in
$S^2 \times S^2$ cannot be tight in the sense of
Definition \ref{dfn:tightness}.
\end{pro}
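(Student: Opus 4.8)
The plan is to show that the Killing nullity being maximal, $\mathrm{nul}_K(L)=6$, forces $L$ to admit a tight substantial map into a Euclidean space of dimension so large that it violates Kuiper's inequality (Theorem~\ref{thm:Kuiper}). First I would observe that $\mathrm{nul}_K(L)=6=\dim\mathfrak{g}$ means the natural map $\mathfrak{i}(G/K)\to\mathfrak{i}(G/K)^{NL}$, $Z\mapsto Z^{NL}$, is an isomorphism; in particular no nonzero Killing field is everywhere tangent to $L$. The key bridge between tightness of the Lagrangian submanifold and tightness of maps into $\mathbb{E}^N$ is the \emph{moment-map} construction: for a Hamiltonian isometric action, the height functions $z\circ\phi$ associated to the moment map $\phi:\tilde M\to\mathfrak{g}^*\cong\mathbb{E}^6$ restricted to $L$ are exactly the functions whose critical points count intersections $L\cap g\cdot L$ for $g$ near the identity in the relevant one-parameter subgroup. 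So I would set up $\phi=\mu|_L:L\to\mathbb{E}^6$ where $\mu$ is the moment map of the $G=SO(3)\times SO(3)$ action, identifying $\mathfrak{g}^*$ with $\mathbb{E}^6$ via the invariant inner product.

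Next I would argue that the hypothesis $\mathrm{nul}_K(L)=6$ is precisely what makes $\phi$ \emph{substantial} into $\mathbb{E}^6$: if $\phi(L)$ lay in a hyperplane $z^\perp$, the linear function $z\circ\phi$ would be constant, which translates (via the moment-map identity $d(z\circ\mu)=\iota_{Z}\omega$) into the Killing field $Z$ dual to $z$ being tangent to $L$ everywhere, forcing a drop in the Killing nullity below $6$. Thus maximality of the nullity gives substantiality. Then I would show that tightness of $L$ in the sense of Definition~\ref{dfn:tightness} implies that each nondegenerate height function $z\circ\phi$ has exactly $SB(L,\mathbb{Z}_2)$ critical points: by Oh's identification, for $g=\exp(tZ)$ close to the identity the transverse intersection number $\#(L\cap g\cdot L)$ equals the number of critical points of the perturbing Hamiltonian, which is $z\circ\phi$. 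Hence tightness of $L$ yields tightness of the map $\phi$.

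With $\phi:L\to\mathbb{E}^6$ a tight substantial map of a closed surface ($n=2$), Kuiper's inequality reads
\begin{eqnarray*}
 6=N \leq \tfrac{1}{2}n(n+3)=\tfrac{1}{2}\cdot 2\cdot 5=5,
\end{eqnarray*}
which is a contradiction. Therefore no tight Lagrangian surface with $\mathrm{nul}_K(L)=6$ can exist. The main obstacle I anticipate is the careful justification of the two translations above: first, that the critical-point count of the height function $z\circ\phi$ genuinely equals the transverse intersection number $\#(L\cap\exp(tZ)\cdot L)$ for small $t$ (this is where Oh's framework and the Lagrangian/Hamiltonian structure are essential, and one must check nondegeneracy and the correspondence between critical points and intersection points); and second, the clean equivalence between substantiality of $\phi$ and the Killing field being non-tangent, which requires verifying that the moment-map components are exactly the Killing-field data entering $\mathrm{nul}_K$. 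Once these dictionary entries are in place, the numerical contradiction from Kuiper is immediate, so the analytic heart of the argument is really the moment-map/height-function correspondence rather than any delicate estimate.
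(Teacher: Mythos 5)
Your proposal is correct and follows essentially the same route as the paper: build the map $\phi:L\to\mathbb{E}^6$ from the Hamiltonian (moment-map) functions of a basis of Killing fields, use $\mathrm{nul}_K(L)=6$ to get substantiality, identify critical points of the height functions with zeros of the normal parts of Killing fields (hence with transverse intersection points of $L$ with nearby translates), and conclude via Kuiper's inequality $6>\tfrac12\cdot 2\cdot 5$. The paper merely runs the argument in the contrapositive direction, deducing from Kuiper that some height function is non-tight and exhibiting the corresponding isometry $\exp(tW)$ violating tightness.
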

{\it Proof.} Assume that $\mathrm{nul}_K(L)=6$.
It suffices to show the following:

{\bf Claim 1}.\
There exists some $g \in G$ which is arbitrarily close to the identity
such that $L$ transversely intersects with $g \cdot L$ and
 \begin{eqnarray*}
  \#(L \cap g \cdot L) > SB(L,\mathbb{Z}_2).
 \end{eqnarray*}

It is equivalent to

{\bf Claim 2}.\
There exists some $W \in \mathfrak{i}(G/K)$ such that all zeros of
$W^{NL} \in \Gamma(NL)$ are nondegenerate and
 \begin{eqnarray*}
  \# \mathrm{Zero}(W^{NL}) > SB(L,\mathbb{Z}_2).
 \end{eqnarray*}

Indeed, for $W \in \mathfrak{i}(G/K)$ as in Claim 2, $\exp(tW) \in G$ will
be an isometry satisfying the property of Claim 1 for sufficiently small $t$.
Hence, we shall prove Claim 2.

Choose $W_1, W_2, \ldots, W_6 \in \mathfrak{i}(G/K)$ such that
$\{ W_1^{NL}, W_2^{NL}, \ldots, W_6^{NL} \}$ form a basis of
$\mathfrak{i}(G/K)^{NL}$.
Since the isometry group action of $G$ on $G/K$ is Hamiltonian,
there exists a function $f_\xi \in C^\infty(G/K)$ corresponding to any element
$\xi \in \mathfrak{i}(G/K) \cong \mathfrak{so}(3) \oplus \mathfrak{so}(3)$
such that
\begin{eqnarray*}
 df_\xi = \omega(\xi, \cdot).
\end{eqnarray*}
Therefore, for each $W_i \in \mathfrak{i}(G/K) \ (i = 1, 2, \ldots, 6)$,
there exists a function $f_i \in C^\infty(G/K)$ such that
$df_i = \omega(W_i, \cdot)$.

Let us define $\phi_i := \iota^{*}f_i \in C^\infty(L) \ (i = 1, 2, \ldots, 6)$
and introduce a smooth map $\phi : L \to \mathbb{E}^6$ defined by
\begin{eqnarray*}
 \phi(p) := (\phi_{1}(p), \phi_{2}(p), \ldots, \phi_{6}(p)) \quad (p \in L).
\end{eqnarray*}
We shall show that $\phi : L \to \mathbb{E}^6$ is substantial.
Assume that $\phi$ is not substantial, i.e., there exists a hyperplane
$H \subset \mathbb{E}^6$ such that $\phi(L) \subset H$.
This condition is equivalent to the one that there exists a nonzero vector
$z^* = (b_1, b_2, \ldots, b_6) \in \mathbb{R}^6$ such that
\begin{eqnarray*}
 (z \circ \phi)(p) = \langle z^*, \phi(p) \rangle = \mathrm{const.}
\end{eqnarray*}
for all $p \in L$.
Hence the differential of $z \circ \phi$ vanishes identically and we have
\begin{eqnarray*}
 0 = d(z \circ \phi)
 &=& d(b_1 \phi_1 + b_2 \phi_2 + \cdots + b_6 \phi_6) \nonumber\\
 &=& b_1 d(\iota^{*}f_1) + b_2 d(\iota^{*} f_2)
  + \cdots + b_6 d(\iota^{*}f_6) \nonumber\\
 &=& \iota^{*}(b_1 df_1 + b_2 df_2 + \cdots + b_6 df_6) \nonumber\\
 &=& \iota^{*}(b_1 \omega(W_1, \cdot) + b_2 \omega(W_2, \cdot)
  + \cdots + b_6 \omega(W_6, \cdot)) \nonumber\\
 &=& \omega(b_1 W_1 + b_2 W_2 + \cdots + b_6 W_6, \iota_{*}(\cdot)).
\end{eqnarray*}
Putting $V := b_1 W_1 + b_2 W_2 + \cdots + b_6 W_6 \in \mathfrak{i}(G/K)$,
then we obtain
\begin{eqnarray*}
 0 = \omega(V^{NL}, \iota_{*}(\cdot)) = g(J V^{NL}, \iota_{*}(\cdot))
\end{eqnarray*}
and it yields
$V^{NL} = b_1 W_1^{NL} + b_2 W_2^{NL} + \cdots + b_6 W_6^{NL} =0$.
Since $W_1^{NL}, W_2^{NL}, \ldots, W_6^{NL}$ are linearly independent,
we have $z^* = (b_1, b_2, \ldots, b_6) = 0$.
This is a contradiction.
Therefore, $\phi : L \to \mathbb{E}^6$ is substantial.

By Theorem \ref{thm:Kuiper}, the smooth map $\phi$ cannot be tight.
Hence, there is some
$z^* =(a_1, a_2, \ldots, a_6) \in S^5(1) \subset \mathbb{E}^6$
such that 
\begin{eqnarray*}
 z \circ \phi 
  = a_1 \phi_1 + a_2 \phi_2 + \cdots + a_6 \phi_6 \in C^{\infty}(L)
\end{eqnarray*}
is nondegenerate but not tight, that is,
\begin{eqnarray*}
 \# \mathrm{Crit}(z \circ \phi) > SB(L, \mathbb Z_2).
\end{eqnarray*}
The differential of $z \circ \phi$ is calculated as
\begin{eqnarray*}
 d(z \circ \phi)
 = \omega(a_1 W_1 + a_2 W_2 + \cdots + a_6 W_6, \iota_{*}(\cdot)).
\end{eqnarray*}
Here, if we put 
\begin{eqnarray} \label{eq:3-1}
 W := a_1 W_1 + a_2 W_2 + \cdots + a_6 W_6 \in \mathfrak{i}(G/K),
\end{eqnarray}
then the critical points of $z \circ \phi$ coincide with the zeros of
$W^{NL} \in \mathfrak{i}(G/K)^{NL}$.
The Killing vector field $W$ in (\ref{eq:3-1}) satisfies the requirement of
Claim 2:
\begin{eqnarray*}
 \# \mathrm{Zero}(W^{NL}) = \# \mathrm{Crit}(z \circ \phi)
  > SB(L, \mathbb Z_2).
\end{eqnarray*}
\hfill \qed

\begin{rem} \rm
The above proposition can be generalized to the case of complex
hyperquadrics $Q_n(\mathbb{C})$.
It is expressed as a coset space $G/K = SO(n+2)/SO(2) \times SO(n)$.
Let $L$ be a closed Lagrangian submanifold in $G/K$.
Since the dimension of the Lie algebra of $G=SO(n+2)$ is $(n+2)(n+1)/2$,
we have $\mathrm{nul}_K(L) \leq (n+2)(n+1)/2$.
The same argument of Proposition \ref{pro:3-1} implies that {\it if
$\mathrm{nul}_K(L) = (n+2)(n+1)/2$, then the closed embedded Lagrangian
submanifold $L$ in $Q_n(\mathbb{C})$ cannot be tight in the sense of
Definition \ref{dfn:tightness}}.
\end{rem}

Before we proceed to the case where
$\mathrm{nul}_K(L)=5$,
let us mention a topological restriction for embedded Lagrangian surfaces
of $S^2 \times S^2$.

\begin{lem} \label{lem:3-3}
Let $\iota : L \to S^2 \times S^2$ be a Lagrangian embedding of a closed
surface. Then the Euler characteristic $\chi(L)$ of $L$ is even.
\end{lem}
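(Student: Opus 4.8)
The plan is to detect the parity of $\chi(L)$ through Stiefel--Whitney classes, where the Lagrangian hypothesis and the topology of $S^2 \times S^2$ combine cleanly. For any closed surface one has the standard congruence $\chi(L) \equiv \langle w_2(TL), [L] \rangle \pmod 2$, where $[L]$ denotes the $\mathbb{Z}_2$-fundamental class; so it suffices to show that the top Stiefel--Whitney number of $L$ vanishes, i.e. $w_2(TL) = 0$. The geometric input is the orthogonal splitting recorded in Section 2: since $L$ is Lagrangian, the complex structure $J$ carries $\iota_{*}T_pL$ isomorphically onto $N_pL$ for every $p$. Thus $J$ is a real vector bundle isomorphism $TL \cong NL$, and in particular $w(TL) = w(NL)$ in $H^{*}(L;\mathbb{Z}_2)$, where $w$ denotes the total Stiefel--Whitney class.

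Next I would exploit the Whitney sum $\iota^{*}T(S^2 \times S^2) = TL \oplus NL$. Because $\chi(S^2)=2$ is even we have $w_2(TS^2)=0$, while $w_1(TS^2)=0$ trivially; hence the total Stiefel--Whitney class of $S^2 \times S^2$ is $1$ (the manifold is spin). Restricting to $L$ and applying the Whitney product formula together with $w(TL)=w(NL)$ yields
\[
1 = w\bigl(\iota^{*}T(S^2\times S^2)\bigr) = w(TL)\,w(NL) = w(TL)^2
\]
in $H^{*}(L;\mathbb{Z}_2)$. Writing $w(TL)=1+w_1+w_2$ and squaring over $\mathbb{Z}_2$, the cross terms double out and $w_2^2$ vanishes for dimensional reasons, leaving $w(TL)^2 = 1 + w_1^2$. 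Comparing with the displayed identity forces $w_1(TL)^2 = 0$ in $H^2(L;\mathbb{Z}_2)$. Finally I would invoke the Wu formula for a closed surface, namely $w_2(TL) = w_1(TL)^2$; combined with the previous step this gives $w_2(TL)=0$, and therefore $\chi(L)$ is even.

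The conceptual hurdle, and the reason I would route the argument through $\mathbb{Z}_2$ cohomology rather than integral intersection theory, is the non-orientable case. For orientable $L$ there is a quicker proof: $J$ identifies the normal Euler number with $\chi(L)$, this number equals the self-intersection $[L]\cdot[L]$, and the intersection form of $S^2 \times S^2$ is even, so $\chi(L)$ is even. But this integral argument breaks down entirely when $L$ is non-orientable, since then neither $[L]$ nor the normal Euler number is a well-defined integer. The Stiefel--Whitney computation treats both cases uniformly, and the only inputs that could require care are the two robust facts it rests on, the spin property $w_2(S^2\times S^2)=0$ and the Wu relation $w_2=w_1^2$ on a surface; I would expect the main effort to lie simply in citing these correctly rather than in any delicate estimate.
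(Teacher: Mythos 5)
Your proof is correct, but it takes a genuinely different route from the paper. The paper works homologically: it writes $\iota_*[L]=mS+nT$ in $H_2(S^2\times S^2;\mathbb{Z})$, observes that the intersection form is even so that $\iota_*[L]\cdot\iota_*[L]=2mn$, and then quotes Arnold--Givental's formula $\iota_*[L]\cdot\iota_*[L]=\chi(L)$ (mod $2$ in the nonorientable case) to conclude. Your argument replaces both ingredients by their characteristic-class shadows: the evenness of the intersection form becomes the spin condition $w_2(S^2\times S^2)=0$ (these are equivalent by Wu's theorem on a $4$-manifold), and Arnold's formula becomes the bundle isomorphism $J:TL\to NL$ fed into the Whitney sum $\iota^*T(S^2\times S^2)=TL\oplus NL$, giving $w(TL)^2=1$, hence $w_1(TL)^2=0$, hence $w_2(TL)=0$ by Wu on the surface. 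Each step you use is standard and correctly applied, and your computation $(1+w_1+w_2)^2=1+w_1^2$ over $\mathbb{Z}_2$ on a surface is right. What your version buys is uniformity and generality: it treats the nonorientable case on exactly the same footing as the orientable one (the paper has to append a parenthetical ``modulo $2$'' caveat and lean on the reference for that case), and it isolates the only hypothesis actually needed, namely $\iota^*w_2(X)=0$, so the same argument shows that any closed Lagrangian surface in a spin symplectic $4$-manifold has even Euler characteristic. What the paper's version buys is brevity and an explicit identification of the possible homology classes $mS+nT$, which is in the spirit of the rest of Section 3. One small caution about your parenthetical ``quicker proof'' in the orientable case: with the standard orientation conventions the normal Euler number of a Lagrangian surface is $-\chi(L)$ rather than $\chi(L)$ (the antidiagonal sphere has self-intersection $-2$ and $\chi=2$); this sign is irrelevant mod $2$ and does not affect your main argument, which never leaves $\mathbb{Z}_2$ coefficients.
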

{\it Proof.} Let $\iota : L \to S^2 \times S^2$ be a Lagrangian embedding.
Then $\iota_{*}[L]$ defines an element of $2$-dimensional integral homology
class $H_2(S^2 \times S^2,\mathbb{Z})$.
Since the homology class is generated by $S:=[S^2 \times \{ \mathrm{pt} \}]$
and $T:=[\{ \mathrm{pt} \} \times S^2]$,
the element $\iota_{*}[L]$ is represented as
\begin{eqnarray*}
 \iota_{*}[L]=mS+nT
\end{eqnarray*}
for some $m, n \in \mathbb{Z}$.
The self-intersection index of the cycle
$\iota_{*}[L] \in H_2(S^2 \times S^2,\mathbb{Z})$
is calculated as
\begin{eqnarray*}
 \iota_{*}[L] \cdot \iota_{*}[L] = (mS+nT) \cdot (mS+nT) = 2mn.
\end{eqnarray*}
This fact and Arnold's formula (see \cite[p. 200]{Gi})
\begin{eqnarray*}
 \iota_{*}[L] \cdot \iota_{*}[L] = \chi(L)
\end{eqnarray*}
(in the nonorientable case the equality is modulo $2$)
implies that $\chi(L)$ is even.
\hfill \qed

\bigskip

Let us consider the case where $\mathrm{nul}_K(L)=5$.

\begin{pro} \label{pro:3-4}
If $\mathrm{nul}_K(L)=5$, then the closed embedded Lagrangian surface $L$ in
$S^2 \times S^2$ cannot be tight in the sense of
Definition \ref{dfn:tightness}.
\end{pro}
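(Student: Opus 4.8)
The plan is to run the construction of Proposition \ref{pro:3-1} in one dimension lower and then exploit the fact that the dimension count now lands exactly on the Kuiper bound. Since $\mathrm{nul}_K(L)=5$, I would choose $W_1,\ldots,W_5 \in \mathfrak{i}(G/K)$ so that $\{W_1^{NL},\ldots,W_5^{NL}\}$ is a basis of $\mathfrak{i}(G/K)^{NL}$, take the associated Hamiltonian functions $f_i$ with $df_i=\omega(W_i,\cdot)$, set $\phi_i:=\iota^{*}f_i$, and assemble $\phi=(\phi_1,\ldots,\phi_5):L\to\mathbb{E}^5$. The computation in Proposition \ref{pro:3-1} then shows verbatim that $\phi$ is substantial: if $\phi(L)$ lay in a hyperplane, some nontrivial combination $V=\sum b_i W_i$ would satisfy $V^{NL}=\sum b_i W_i^{NL}=0$, contradicting the linear independence of the $W_i^{NL}$.

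Next I would argue that the tightness of $L$ forces $\phi$ to be a tight map. For any $z^{*}=(a_1,\ldots,a_5)\in S^4$ with $z\circ\phi$ nondegenerate, the same differential computation gives $d(z\circ\phi)=\omega(W,\iota_{*}(\cdot))$ with $W=\sum a_i W_i$, so the critical points of $z\circ\phi$ coincide with the nondegenerate zeros of $W^{NL}$, which in turn correspond to the transverse intersection points of $L$ with $\exp(tW)\cdot L$ for small $t$. If $\phi$ were not tight, some such $z\circ\phi$ would satisfy $\#\mathrm{Crit}(z\circ\phi)>SB(L,\mathbb{Z}_2)$, i.e. $\#\mathrm{Zero}(W^{NL})>SB(L,\mathbb{Z}_2)$, which is exactly the configuration (Claim 2, equivalently Claim 1, of Proposition \ref{pro:3-1}) that the tightness of $L$ forbids. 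Hence every nondegenerate height function of $\phi$ is perfect, so $\phi$ is tight.

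Now I would invoke the \emph{equality case} of Kuiper's inequality, which is the crucial new ingredient. Since $L$ is a surface, $n=2$ and $\frac{1}{2}n(n+3)=5=N$, so $\phi$ is a tight substantial map into $\mathbb{E}^5$ that saturates the Kuiper bound. By the equality clause of Theorem \ref{thm:Kuiper}, this is possible only if $L\cong\mathbb{R}P^2$ (with $\phi(L)$ a Veronese surface). But $\chi(\mathbb{R}P^2)=1$ is odd, contradicting Lemma \ref{lem:3-3}, which asserts that any embedded Lagrangian surface of $S^2\times S^2$ has even Euler characteristic. This contradiction shows that $L$ cannot be tight.

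The bulk of the work — building $\phi$, checking substantiality, and converting the tightness of $L$ into the tightness of $\phi$ — is a routine adaptation of Proposition \ref{pro:3-1}. The genuinely new and delicate point, and the step I expect to matter most, is that the dimension count alone no longer rules out a tight map: one lands precisely on the Kuiper bound and must use its rigidity to identify $L$ with $\mathbb{R}P^2$, after which the parity obstruction of Lemma \ref{lem:3-3} closes the argument.
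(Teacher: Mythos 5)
Your proposal is correct and follows essentially the same route as the paper's own proof: build the substantial map $\phi:L\to\mathbb{E}^5$ from the Hamiltonians of a basis of $\mathfrak{i}(G/K)^{NL}$, deduce tightness of $\phi$ from tightness of $L$, invoke the rigidity (equality) clause of Kuiper's theorem to force $L\cong\mathbb{R}P^2$, and conclude by the parity obstruction of Lemma \ref{lem:3-3}. You also correctly single out the equality case of Kuiper's inequality as the essential new ingredient beyond Proposition \ref{pro:3-1}, exactly as the paper does.
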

{\it Proof.} Let $\iota : L \to S^2 \times S^2$ be a Lagrangian embedding
of a closed surface $L$.
Suppose that $L$ is tight, i.e., it satisfies
\begin{eqnarray*}
 \#(L \cap g \cdot L) = SB(L,\mathbb{Z}_2)
\end{eqnarray*}
for any isometry $g \in G$ close to the identity such that $L$ transversely
intersects with $g \cdot L$.
This condition implies that
\begin{eqnarray} \label{eq:3-2}
 \# \mathrm{Zero}(W^{NL}) = SB(L, \mathbb Z_2)
\end{eqnarray}
for any $W \in \mathfrak{i}(G/K)$ such that all zeros of
$W^{NL} \in \mathfrak{i}(G/K)^{NL}$ are nondegenerate.

Since $\mathrm{nul}_K(L)=5$, we can choose
$W_1, \ldots, W_5 \in \mathfrak{i}(G/K)$ such that
$\{ W_1^{NL}, \ldots ,W_5^{NL} \}$ form a basis of $\mathfrak{i}(G/K)^{NL}$.
For any $W_i \in \mathfrak{i}(G/K) \ (i = 1, \ldots, 5)$,
there exists a function
$f_i \in C^{\infty}(G/K)$ such that $df_i = \omega(W_i,\cdot)$.

Define $\phi_i := \iota^{*}f_i \in C^{\infty}(L) \ (i = 1, \ldots, 5)$
and consider a smooth map $\phi : L \to \mathbb{E}^5$ defined by
\begin{eqnarray*}
 \phi(p) := (\phi_{1}(p), \ldots, \phi_{5}(p)) \quad (p \in L).
\end{eqnarray*}
As in the proof of Proposition \ref{pro:3-1},
we see that $\phi$ is a substantial map.

For any $z^* =(a_1, \ldots, a_5) \in S^{4}(1) \subset \mathbb{E}^5$ such that
$z \circ \phi = a_1 \phi_1 + \cdots + a_5 \phi_5$ is nondegenerate, we obtain
\begin{eqnarray*}
 d(z \circ \phi)
  &=& \omega(a_1 W_1 + \cdots + a_5 W_5, \iota_{*}(\cdot)).
\end{eqnarray*}
Putting $W:= a_1 W_1 + \cdots + a_5 W_5 \in \mathfrak{i}(G/K)$, then we have
\begin{eqnarray} \label{eq:3-3}
  d(z \circ \phi)(p) = \omega\left(W^{NL}(p), \iota_{*_p}(\cdot) \right)
\end{eqnarray}
for any $p \in L$. By equations (\ref{eq:3-2}) and (\ref{eq:3-3}), we obtain
\begin{eqnarray*}
 \# \mathrm{Crit}(z \circ \phi) = \# \mathrm{Zero}(W^{NL})
  = SB(L, \mathbb Z_2)
\end{eqnarray*}
for any $z^* =(a_1, \ldots, a_5) \in S^{4}(1) \subset \mathbb{E}^5$ such that
$z \circ \phi$ is nondegenerate.
Hence, $\phi : L \to \mathbb{E}^5$ is a tight substantial map of a closed
surface.

Theorem \ref{thm:Kuiper} implies that $\phi$ satisfies the equality of
(\ref{eq:2-1}).
Hence, we have $L=\mathbb{R}P^2$ and $\phi$ is the Veronese embedding.
But Lemma \ref{lem:3-3} shows that $\mathbb{R}P^2$ cannot be realized as
a Lagrangian embedding into $S^2 \times S^2$, since $\chi(\mathbb{R}P^2)=1$.
\hfill \qed

\section{Symmetric spaces and Gotoh's inequality}
\setcounter{equation}{0}

In this section, we explain Gotoh's inequality which gives a lower bound
of the Killing nullity of any submanifold in compact symmetric spaces.

Let $(G, H)$ be a Riemannian symmetric pair and
$\mathfrak g = \mathfrak h + \tilde{\mathfrak m}$
its canonical decomposition, where $\mathfrak g$ and $\mathfrak h$ denote
the Lie algebras of $G$ and $H$, respectively,
and $\tilde{\mathfrak m}$ is naturally identified with the tangent space
$T_o(G/H)$ of the origin $o=H$ in the symmetric space $G/H$.
Let $M$ be a compact submanifold in $G/H$.
We may assume that $M$ contains the origin $o=H$.
Then $\tilde{\mathfrak m}$ is orthogonally decomposed as
$$
\tilde{\mathfrak m} = \mathfrak m + {\mathfrak m}^\perp,
$$
where subspaces $\mathfrak m$ and ${\mathfrak m}^\perp$ correspond to
the tangent space $T_oM$ of $M$ and the normal space $N_oM$,
respectively.
Hence, we have an orthogonal decomposition
$$
\mathfrak g = \mathfrak h + \mathfrak m + {\mathfrak m}^\perp.
$$
Any $Z \in \mathfrak g$ can be decomposed as
$$
Z = Z_{\mathfrak h} + Z_{\mathfrak m} + Z^\perp
$$
according to the above orthogonal decomposition.
Define two linear mappings
$\Psi_1 : \mathfrak g \rightarrow {\mathfrak m}^\perp$
and
$\Psi_2 : \mathfrak g \rightarrow \mathrm{Hom}(\mathfrak m, {\mathfrak m}^\perp)$
by
\begin{eqnarray*}
\Psi_1(Z) := Z^\perp \ \ \mathrm{and} \ \ 
\Psi_2(Z)(X) := (\mathrm{ad}_{\mathfrak g}(Z_{\mathfrak h})X)^\perp
- B(X, Z_{\mathfrak m}) \qquad (X \in \mathfrak m),
\end{eqnarray*}
where $B : \mathfrak m \times \mathfrak m \to {\mathfrak m}^\perp$
is the bilinear mapping corresponding to the second fundamental form
of $M$ at $o$.
Then the following theorem has been proven by T.\ Gotoh.

\begin{thm}[Gotoh \cite{Gotoh99}] \label{thm:Killing nullity}
Let $G/K$ be a compact Riemannian symmetric space and $M$
a compact connected submanifold of $G/K$.
Then, the Killing nullity of $M$ satisfies the inequality
\begin{eqnarray} \label{eq:4-1}
 \mathrm{nul}_K(M)
  \geq \mathrm{codim}(M) + \dim \mathrm{Im} (\Psi_2|_{\mathfrak h}).
\end{eqnarray}
Moreover, if $M$ satisfies the equality in (\ref{eq:4-1}),
then $M$ is an orbit of a closed subgroup of $G$,
i.e., $M$ is a homogeneous submanifold of $G/K$.
\end{thm}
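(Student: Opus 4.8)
Theorem 4.1 (Gotoh) — The plan for the inequality.

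The plan is to evaluate everything at the origin $o$ and to identify the jet map $\Phi_o$ from the Preliminaries with the pair $(\Psi_1,\Psi_2)$. Write $P:\mathfrak g\to\Gamma(NM)$, $P(Z):=Z^{NM}$, so that $\mathfrak i(\tilde M)^{NM}=\mathrm{Im}\,P$ and $\mathrm{nul}_K(M)=\dim\mathrm{Im}\,P$, while $\ker P$ is the space $\mathfrak g_M$ of Killing fields everywhere tangent to $M$. First I would compute $\Phi_o\circ P$: the value at $o$ of the Killing field generated by $Z$ is the $\tilde{\mathfrak m}$-component of $Z$, so its normal part is $Z^\perp=\Psi_1(Z)$; and using the standard formula for the covariant derivative of a Killing field on a symmetric space together with the Gauss--Weingarten decomposition $(\nabla_X Z)^{NM}=\nabla^{NM}_X Z^{NM}+B(X,Z^{TM})$, the normal-connection part of $\Phi_o$ reduces exactly to $\Psi_2(Z)$ (the sign convention being fixed to match the definition of $\Psi_2$). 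Hence $\Phi_o\circ P=\Psi_1\oplus\Psi_2=:\Psi$ as maps $\mathfrak g\to\mathfrak m^\perp\oplus\mathrm{Hom}(\mathfrak m,\mathfrak m^\perp)$, and in particular $\mathrm{Im}\,\Phi_o=\mathrm{Im}\,\Psi$. This identification is the technical heart, and getting the $\Psi_2$ term to match via the symmetric-space connection formula is the step I expect to be the most delicate.

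For the inequality I would combine $\mathrm{nul}_K(M)\ge\dim\mathrm{Im}\,\Phi_o=\dim\mathrm{Im}\,\Psi$ with elementary linear algebra. Since every $v\in\mathfrak m^\perp\subset\mathfrak g$ has $\Psi_1(v)=v$, the projection $\pi_1:\mathrm{Im}\,\Psi\to\mathfrak m^\perp$ is onto, so rank--nullity gives $\dim\mathrm{Im}\,\Psi=\dim\mathfrak m^\perp+\dim(\mathrm{Im}\,\Psi\cap(\{0\}\oplus\mathrm{Hom}(\mathfrak m,\mathfrak m^\perp)))$. Because $\Psi_1|_{\mathfrak h}=0$, one has $\Psi|_{\mathfrak h}=(0,\Psi_2|_{\mathfrak h})$, so $\{0\}\oplus\mathrm{Im}(\Psi_2|_{\mathfrak h})$ sits inside that intersection; hence $\dim\mathrm{Im}\,\Psi\ge\mathrm{codim}(M)+\dim\mathrm{Im}(\Psi_2|_{\mathfrak h})$, which is (\ref{eq:4-1}).

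For the equality statement, note that equality in (\ref{eq:4-1}) forces every inequality in the chain above to be an equality. In particular $\dim\mathrm{Im}\,\Phi_o=\dim\mathrm{Im}\,P$, so $\Phi_o$ is injective on $\mathrm{Im}\,P$; tracing this back through $\Phi_o\circ P=\Psi$ yields $\ker\Psi=\mathfrak g_M$, i.e. the Killing fields with vanishing normal $1$-jet at $o$ are precisely those tangent to $M$ everywhere. The plan is then to show that these tangent fields already span $T_oM$. Writing $V_o:=\{Z_o:Z\in\mathfrak g_M\}\subseteq\mathfrak m$ and projecting along $\mathfrak h\oplus\mathfrak m^\perp$, one gets $\dim V_o=\dim\mathfrak g_M-\dim(\mathfrak g_M\cap\mathfrak h)$; using $\dim\mathfrak g_M=\dim\mathfrak g-\mathrm{nul}_K(M)$, the identity $\mathfrak g_M\cap\mathfrak h=\ker(\Psi_2|_{\mathfrak h})$ (from $\ker\Psi=\mathfrak g_M$), and the equality value of $\mathrm{nul}_K(M)$, a short count collapses to $\dim V_o=\dim\mathfrak g-\dim\mathfrak h-\mathrm{codim}(M)=\dim M$. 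Hence $V_o=\mathfrak m=T_oM$.

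Finally I would conclude homogeneity. Since tangency to $M$ is preserved under Lie bracket, $\mathfrak g_M$ is a subalgebra; let $G_M$ be the corresponding connected subgroup, whose flows preserve $M$, and pass to the compact closed subgroup $\overline{G_M}\subseteq G$, which also preserves $M$. The orbit $\overline{G_M}\cdot o$ is a closed embedded submanifold of $M$ whose tangent space at $o$ contains $V_o=T_oM$, so it is open in $M$; being open and closed in the connected $M$, it equals $M$. Thus $M$ is the orbit of a closed subgroup of $G$, i.e. homogeneous. The one point requiring care here is the topological passage from an open orbit of a possibly non-closed subgroup to all of $M$, which is precisely why I replace $G_M$ by its closure.
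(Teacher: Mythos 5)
The paper does not actually prove this statement: it is quoted verbatim from Gotoh \cite{Gotoh99}, and Section 4 only sets up the maps $\Psi_1,\Psi_2$. Your reconstruction is correct and is evidently the intended mechanism behind that setup: $(\Psi_1,\Psi_2)$ is exactly the value $\Phi_o\circ P$ of the $1$-jet map from Section 2 on the normal parts of Killing fields (via $(\nabla_XZ^*)_o=[Z_{\mathfrak h},X]$ on a symmetric space plus Gauss--Weingarten), the inequality then follows from your rank--nullity argument using $\Psi_1|_{\mathfrak m^\perp}=\mathrm{id}$ and $\Psi_1|_{\mathfrak h}=0$, and the equality case correctly yields $\ker\Psi=\mathfrak g_M$, the dimension count $\dim V_o=\dim M$, and homogeneity via the closure of the connected subgroup integrating $\mathfrak g_M$. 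I see no gap; the only point to nail down in a full write-up is the sign convention in the symmetric-space formula for $\nabla Z^*$, which you have already flagged.
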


\begin{rem} \rm
We note that the Killing nullity $\mathrm{nul}_K(M)$
is a global invariant of $M$.
On the other hand, the right hand side of (\ref{eq:4-1})
is determined at the origin $o \in M$,
because $\mathrm{Im} (\Psi_2|_{\mathfrak h})$ is only depend on
the choice of a subspace $\mathfrak m$ in $\tilde{\mathfrak m}$.
\end{rem}

\section{An estimate for the case of $S^2 \times S^2$}
\setcounter{equation}{0}

In this section we shall give an estimate of (\ref{eq:4-1})
in the case of $S^2 \times S^2$ explicitly.

We set
$$
S^2 \times S^2 := \{ (x, y) \in \mathbb R^3 \times \mathbb R^3
\ | \ \| x \| = \| y \| = 1 \}
\subset \mathbb R^3 \times \mathbb R^3,
$$
and assume that $S^2 \times S^2$ is equipped with
a complex structure $J := J_0 \oplus J_0$,
where $J_0$ is the canonical complex structure of $S^2$.
Let $\{ e_1, e_2, e_3 \}$ be the standard basis of $\mathbb R^3$
and put
$o := (e_1, e_1) \in S^2 \times S^2 \subset \mathbb R^3 \times \mathbb R^3$.
Then $J$ acts on the basis $\{ (e_2, 0), (e_3, 0), (0, e_2), (0,e_3) \}$
of the tangent space $T_o(S^2 \times S^2)$ of $S^2 \times S^2$ at the
origin $o$ as:
\begin{eqnarray*}
J(e_2, 0) = (e_3, 0),
& & J(e_3, 0) = (-e_2, 0), \\
J(0, e_2) = (0, e_3),
& & J(0, e_3) = (0, -e_2).
\end{eqnarray*}

A compact Lie group $G := SO(3) \times SO(3)$
acts on $S^2 \times S^2$ transitively and isometrically. 
Then the isotropy subgroup at $o$ is
$$
H = \left\{ \left(
\left[ \begin{array}{cc} 1 & O \\ O & A \end{array} \right],
\left[ \begin{array}{cc} 1 & O \\ O & B \end{array} \right]
\right) \ \bigg| \ A, B \in SO(2) \right\}
\cong SO(2) \times SO(2).
$$
Therefore $S^2 \times S^2$ can be identified with a homogeneous space
$G/H$ in the following manner:
\begin{eqnarray*}
S^2 \times S^2 &\cong& G/H = (SO(3) \times SO(3)) / (SO(2) \times SO(2)), \\
g \cdot o &\longleftrightarrow& gH.
\end{eqnarray*}

We denote the Lie algebras of $G$ and $H$ by $\mathfrak g$ and $\mathfrak h$,
respectively.
Here $\mathfrak g$ and $\mathfrak h$ can be expressed as the following:
\begin{eqnarray*}
\mathfrak g &=& \mathfrak{so}(3) \oplus \mathfrak{so}(3) \\
&=& \left\{ \left(
\left[ \begin{array}{ccc}
0 & -x_1 & -x_2 \\
x_1 & 0 & -x_3 \\
x_2 & x_3 & 0
\end{array} \right],
\left[ \begin{array}{ccc}
0 & -y_1 & -y_2 \\
y_1 & 0 & -y_3 \\
y_2 & y_3 & 0
\end{array} \right]
\right) \ \Bigg| \  x_1, x_2, x_3, y_1, y_2, y_3 \in \mathbb R \right\},\\
\\
\mathfrak h &=& \mathfrak{so}(2) \oplus \mathfrak{so}(2) \\
&=& \left\{ \left(
\left[ \begin{array}{ccc}
0 & 0 & 0 \\
0 & 0 & -z_1 \\
0 & z_1 & 0
\end{array} \right],
\left[ \begin{array}{ccc}
0 & 0 & 0 \\
0 & 0 & -z_2 \\
0 & z_2 & 0
\end{array} \right]
\right) \ \Bigg| \  z_1, z_2 \in \mathbb R \right\}.
\end{eqnarray*}
We set a subspace $\tilde{\mathfrak m}$
of $\mathfrak g$ as
$$
\tilde{\mathfrak m}
= \left\{ \left(
\left[ \begin{array}{ccc}
0 & -x_1 & -x_2 \\
x_1 & 0 & 0 \\
x_2 & 0 & 0
\end{array} \right],
\left[ \begin{array}{ccc}
0 & -y_1 & -y_2 \\
y_1 & 0 & 0 \\
y_2 & 0 & 0
\end{array} \right]
\right) \ \Bigg| \  x_1, x_2, y_1, y_2 \in \mathbb R \right\}.
$$
Then we have a canonical decomposition 
$\mathfrak g = \mathfrak h \oplus \tilde{\mathfrak m}$.
The tangent space $T_o(S^2 \times S^2)$
can be identified with $\tilde{\mathfrak m}$ in a natural manner.
More precisely, the bases of these spaces correspond
with each other as in the following:
\begin{eqnarray*}
T_o(S^2 \times S^2) &\cong& \tilde{\mathfrak m} \\
(e_2, 0) &\longleftrightarrow&
\left( \left[ \begin{array}{ccc}
0 & -1 & 0 \\
1 & 0 & 0 \\
0 & 0 & 0
\end{array} \right],
\left[ \begin{array}{ccc}
0 & 0 & 0 \\
0 & 0 & 0 \\
0 & 0 & 0
\end{array} \right] \right), \\
(e_3, 0) &\longleftrightarrow&
\left( \left[ \begin{array}{ccc}
0 & 0 & -1 \\
0 & 0 & 0 \\
1 & 0 & 0
\end{array} \right],
\left[ \begin{array}{ccc}
0 & 0 & 0 \\
0 & 0 & 0 \\
0 & 0 & 0
\end{array} \right] \right), \\
(0, e_2) &\longleftrightarrow&
\left( \left[ \begin{array}{ccc}
0 & 0 & 0 \\
0 & 0 & 0 \\
0 & 0 & 0
\end{array} \right],
\left[ \begin{array}{ccc}
0 & -1 & 0 \\
1 & 0 & 0 \\
0 & 0 & 0
\end{array} \right] \right), \\
(0, e_3) &\longleftrightarrow& 
\left( \left[ \begin{array}{ccc}
0 & 0 & 0 \\
0 & 0 & 0 \\
0 & 0 & 0
\end{array} \right],
\left[ \begin{array}{ccc}
0 & 0 & -1 \\
0 & 0 & 0 \\
1 & 0 & 0
\end{array} \right] \right).
\end{eqnarray*}

We denote the Grassmannian manifold of all oriented $2$-planes
in $T_o(G/H)$ by $\tilde G_2(T_o(G/H))$.
The action of the rotation group $SO(T_o(G/H)) =: G'$ on $T_o(G/H)$
induces a transitive action of $G'$ on $\tilde G_2(T_o(G/H))$.
We can express $G'$ as a matrix group $SO(4)$
with respect to the basis $\{ (e_2 ,0), (e_3, 0), (0, e_2), (0, e_3) \}$
of $T_o(G/H)$.
Then the isotropy subgroup of the action of $G'$ at
$V_o : = \mathrm{span}_{\mathbb R} \{ (e_2, 0), (e_3, 0) \}
\in \tilde G_2(T_o(G/H))$ is
$$
H' = \left\{ \left[ \begin{array}{cc} A & O \\ O & B \end{array} \right]
\ \bigg| \ A, B \in SO(2) \right\}
\cong SO(2) \times SO(2).
$$
Therefore $\tilde G_2(T_o(G/H))$ can be identified with
a homogeneous space
$$
\tilde G_2(T_o(G/H)) \cong G'/H' = SO(4) / (SO(2) \times SO(2))
$$
with the origin $V_o$.
We also denote Lie algebras of $G'$ and $H'$ by
$\mathfrak g'$ and $\mathfrak h'$,
respectively, i.e.,
\begin{eqnarray*}
\mathfrak g' &=& \mathfrak{so}(4) = \{ X \in M_4(\mathbb R) \ | \ ^tX = -X \}, \\
\mathfrak h' &=& \left\{ \left[ \begin{array}{cc} X & O \\ O & Y \end{array} \right] 
\ \bigg| \ X, Y \in \mathfrak{so}(2) \right\}.
\end{eqnarray*}
Now we set a subspace $\mathfrak m'$ of $\mathfrak g'$ as
$$
\mathfrak m'
:= \left\{ \left[ \begin{array}{cc} O & -^tX \\ X & O \end{array} \right] 
\ \bigg| \ X \in M_2(\mathbb R) \right\}.
$$
Then we have a canonical decomposition
$\mathfrak g' = \mathfrak h' \oplus \mathfrak m'$.
The tangent space of $\tilde G_2(T_o(G/H))$ at $V_0$
can be identified with $\mathfrak m'$.
Take a maximal abelian subspace $\mathfrak a'$ of $\mathfrak m'$ as
$$
\mathfrak a'
:= \left\{ \left[ \begin{array}{cc} O & -^tX \\ X & O \end{array} \right] \ \bigg| \
X = \left[ \begin{array}{cc} \theta_1 & 0 \\ 0 & \theta_2 \end{array} \right] \right\}.
$$
Then the set $\Delta$ of all positive restricted roots of a compact symmetric pair
$(G',H')$ is given by
$$
\Delta = \{ \theta_1 + \theta_2, \theta_1 - \theta_2 \},
$$
and 
$$
\mathcal C := \left\{ \left[ \begin{array}{cc} O & -^tX \\ X & O \end{array} \right] \ \bigg| \
X = \left[ \begin{array}{cc} \theta_1 & 0 \\ 0 & \theta_2 \end{array} \right]
\begin{array}{c} 0 \leq \theta_1 + \theta_2 \leq \pi \\
0 \leq \theta_1 - \theta_2 \leq \pi \end{array} \right\}
$$
is a fundamental domain of $\mathfrak a'$.
The action of $H$ on $\tilde G_2(T_o(G/H)) \cong G'/H'$
is equivalent with the isotropy action of $H'$ on $G'/H'$.
Each orbit of $H$-action on $\tilde G_2(T_o(G/H)) \cong G'/H'$
intersects $\mathrm{Exp}(\mathcal C)$ with only one point.
Hence the orbit space of $H$-action can be identified with $\mathcal C$.
This implies that
$\theta_1 + \theta_2$ and $\theta_1 - \theta_2$ are
invariants of $H$-action on $\tilde G_2(T_o(G/H)) \cong G'/H'$.
Geometrically, $\theta_1 - \theta_2$ is
the K\"ahler angle with respect to a complex structure $J_0 \oplus J_0$.
On the other hand, $\theta_1 + \theta_2$ is the K\"ahler angle
with respect to $J_0 \oplus (-J_0)$.

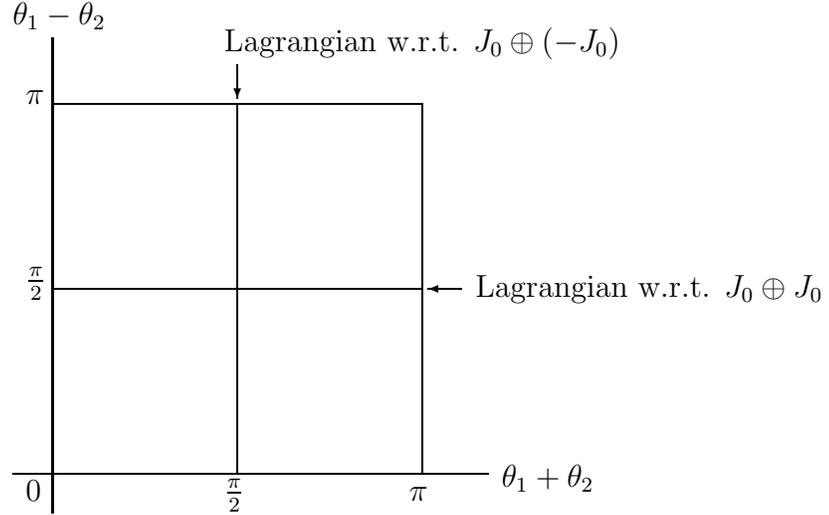
\begin{figure}[h]
\begin{center}
\begin{picture}(200,200)
\put(15,0){\line(0,1){180}}
\put(0,15){\line(1,0){180}}
\put(85,15){\line(0,1){140}}
\put(15,85){\line(1,0){140}}
\put(155,15){\line(0,1){140}}
\put(15,155){\line(1,0){140}}
\put(0,185){$\theta_1 - \theta_2$}
\put(185,10){$\theta_1 + \theta_2$}
\put(5,5){$0$}
\put(80,5){$\frac{\pi}{2}$}
\put(150,5){$\pi$}
\put(5,85){$\frac{\pi}{2}$}
\put(5,155){$\pi$}
\put(85,170){\vector(0,-1){13}}
\put(80,175){Lagrangian w.r.t. $J_0 \oplus (-J_0)$}
\put(170,85){\vector(-1,0){13}}
\put(175,82){Lagrangian w.r.t. $J_0 \oplus J_0$}
\end{picture}
\end{center}
\caption{figure of $\mathcal C$}
\end{figure}

Put $X \in \mathcal C$ as
$$
X = \left[ \begin{array}{cccc}
0 & 0 & -\theta_1 & 0 \\
0 & 0 & 0 & -\theta_2 \\
\theta_1 & 0 & 0 & 0 \\
0 & \theta_2 & 0 & 0
\end{array} \right].
$$
Then $\mathrm{Exp}X \in G'/H' \cong \tilde G_2(T_o(G/H))$
can be expressed as
\begin{eqnarray*}
\mathrm{Exp}X &=& \exp X \cdot V_0 \\
&=& \left[ \begin{array}{cccc}
\cos\theta_1 & 0 & -\sin\theta_1 & 0 \\
0 & \cos\theta_2 & 0 & -\sin\theta_2 \\
\sin\theta_1 & 0 & \cos\theta_1 & 0 \\
0 & \sin\theta_2 & 0 & \cos\theta_2
\end{array} \right] \cdot
\mathrm{span} \{ (e_2, 0), (e_3, 0) \} \\
&=& \mathrm{span} \{ \cos\theta_1 (e_2, 0) + \sin\theta_1 (0, e_2),\
\cos\theta_2 (e_3, 0) + \sin\theta_2 (0, e_3) \}.
\end{eqnarray*}
Hereafter we consider Lagrangian surfaces
with respect to a complex structure $J_0 \oplus J_0$.
Hence we assume that $\theta_1 - \theta_2 = \frac{\pi}{2}$
and put
$$
\theta := \theta_1 = \theta_2 + \frac{\pi}{2} \qquad
\left( \frac{\pi}{4} \leq \theta \leq \frac{3}{4}\pi \right).
$$
Then
$$
\mathrm{Exp}X
= \mathrm{span} \{ \cos\theta (e_2, 0) + \sin\theta (0, e_2),\
\sin\theta (e_3, 0) - \cos\theta (0, e_3) \}.
$$
We denote by $\mathfrak m_\theta$ a $2$-dimensional subspace of
$\tilde{\mathfrak m}$ which corresponds to $\mathrm{Exp}X$
by the identification of $T_o(S^2 \times S^2)$ and $\tilde{\mathfrak m}$.
Then
\begin{eqnarray*}
{\mathfrak m}_\theta
&=& \mathrm{span} \left\{ \left(
\left[ \begin{array}{ccc}
0 & -\cos\theta & 0 \\
\cos\theta & 0 & 0 \\
0 & 0 & 0
\end{array} \right],
\left[ \begin{array}{ccc}
0 & -\sin\theta & 0 \\
\sin\theta & 0 & 0 \\
0 & 0 & 0
\end{array} \right]
\right), \right.\\
& & \hspace{33pt} \left. \left(
\left[ \begin{array}{ccc}
0 & 0 & -\sin\theta \\
0 & 0 & 0 \\
\sin\theta & 0 & 0
\end{array} \right],
\left[ \begin{array}{ccc}
0 & 0 & \cos\theta \\
0 & 0 & 0 \\
-\cos\theta & 0 & 0
\end{array} \right]
\right) \right\} \\
&=& \left\{ 
\left(
\left[ \begin{array}{ccc}
0 & -x_1 \cos\theta & -x_2 \sin\theta \\
x_1 \cos\theta & 0 & 0 \\
x_2 \sin\theta & 0 & 0
\end{array} \right],
\left[ \begin{array}{ccc}
0 & -x_1 \sin\theta & x_2 \cos\theta \\
x_1 \sin\theta & 0 & 0 \\
-x_2 \cos\theta & 0 & 0
\end{array} \right]
\right)
\right. \\
& & \hspace{300pt}\left. \Bigg| \ x_1, x_2 \in \mathbb R \right\}.
\end{eqnarray*}
We have an orthogonal decomposition 
$$
\mathfrak g = \mathfrak h + {\mathfrak m}_\theta + {\mathfrak m}_\theta^\perp.
$$
Now we shall determine $\mathrm{Ker}(\Psi_2|_{\mathfrak h})$
under these notations.
For
\begin{eqnarray*}
Z &=& \left(
\left[ \begin{array}{ccc}
0 & 0 & 0 \\
0 & 0 & -z_1 \\
0 & z_1 & 0
\end{array} \right],
\left[ \begin{array}{ccc}
0 & 0 & 0 \\
0 & 0 & -z_2 \\
0 & z_2 & 0
\end{array} \right]
\right) \in \mathfrak h, \\
X &=& \left(
\left[ \begin{array}{ccc}
0 & -x_1 \cos\theta & -x_2 \sin\theta \\
x_1 \cos\theta & 0 & 0 \\
x_2 \sin\theta & 0 & 0
\end{array} \right],
\left[ \begin{array}{ccc}
0 & -x_1 \sin\theta & x_2 \cos\theta \\
x_1 \sin\theta & 0 & 0 \\
-x_2 \cos\theta & 0 & 0
\end{array} \right]
\right)
\in {\mathfrak m}_\theta,
\end{eqnarray*}
we have
\begin{eqnarray*}
& & \Psi_2(Z)X
= (\mathrm{ad}(Z)X)^\perp \\
&=& \left(
\left[ \begin{array}{ccc}
0 & x_2 z_1 \sin\theta & -x_1 z_1 \cos\theta \\
-x_2 z_1 \sin\theta & 0 & 0 \\
x_1 z_1 \cos\theta & 0 & 0
\end{array} \right],
\left[ \begin{array}{ccc}
0 & -x_2 z_2 \cos\theta & -x_1 z_2 \sin\theta \\
x_2 z_2 \cos\theta & 0 & 0 \\
x_1 z_2 \sin\theta & 0 & 0
\end{array} \right]
\right)^\perp.
\end{eqnarray*}
Thus for
$$
Y = \left(
\left[ \begin{array}{ccc}
0 & -y_1 \sin\theta & -y_2 \cos\theta \\
y_1 \sin\theta & 0 & 0 \\
y_2 \cos\theta & 0 & 0
\end{array} \right],
\left[ \begin{array}{ccc}
0 & y_1 \cos\theta & -y_2 \sin\theta \\
-y_1 \cos\theta & 0 & 0 \\
y_2 \sin\theta & 0 & 0
\end{array} \right]
\right)
\in {\mathfrak m}_\theta^\perp,
$$
we have
\begin{eqnarray*}
\langle \Psi_2(Z)X, Y \rangle
&=& \langle [Z, X]^\perp, Y \rangle
= \langle[Z, X], Y \rangle
= -\frac{1}{2} \mathrm{tr}(ZXY - XZY) \\
&=& x_1 y_2 (z_1 \cos^2\theta + z_2 \sin^2\theta)
- x_2 y_1 (z_1 \sin^2\theta + z_2 \cos^2\theta).
\end{eqnarray*}
Note that $Z \in \mathfrak h$ is in $\mathrm{Ker}(\Psi_2|_{\mathfrak h})$
if and only if $\langle \Psi_2(Z)X, Y \rangle = 0$
for any $X \in {\mathfrak m}_\theta, Y \in {\mathfrak m}_\theta^\perp$.
Thus
\begin{eqnarray*}
Z \in \mathrm{Ker}(\Psi_2|_{\mathfrak h})
&\Longleftrightarrow& \langle \Psi_2(Z)X, Y \rangle = 0 \qquad
(^\forall X \in {\mathfrak m}_\theta,\ ^\forall Y \in {\mathfrak m}_\theta^\perp) \\
&\Longleftrightarrow& \left\{ \begin{array}{l}
z_1 \cos^2\theta + z_2 \sin^2\theta = 0\\
z_1 \sin^2\theta + z_2 \cos^2\theta = 0
\end{array} \right. \\
&\Longleftrightarrow& \left\{ \begin{array}{l}
z_1 = -z_2 \qquad (\text{if}\ \cos^2\theta = \sin^2\theta)\\
z_1 = z_2 = 0 \qquad (\text{if}\ \cos^2\theta \neq \sin^2\theta).
\end{array} \right.
\end{eqnarray*}
This yields that when $\theta = \frac{\pi}{4}$ or $\theta = \frac{3}{4}\pi$
$$
\mathrm{Ker}(\Psi_2|_{\mathfrak h})
= \left\{ \left(
\left[ \begin{array}{ccc}
0 & 0 & 0 \\
0 & 0 & -z \\
0 & z & 0
\end{array} \right],
\left[ \begin{array}{ccc}
0 & 0 & 0 \\
0 & 0 & z \\
0 & -z & 0
\end{array} \right]
\right)
\ \Bigg| \ z \in \mathbb R
\right\},
$$
hence $\dim (\mathrm{Im}(\Psi_2|_{\mathfrak h})) = 1$.
Otherwise $\mathrm{Ker}(\Psi_2|_{\mathfrak h}) = \{ 0 \}$, hence
$\dim (\mathrm{Im}(\Psi_2|_{\mathfrak h})) = 2$.

Let $L$ be a Lagrangian surface of $S^2 \times S^2$ with respect to
a complex structure $J_0 \oplus J_0$.
Assume that $L$ contains $o = (e_1, e_1) \in S^2 \times S^2$.
Then from Theorem \ref{thm:Killing nullity}
we have
\begin{equation} \label{eq:5-1}
\mathrm{nul}_K(L)
\geq \mathrm{codim}(L) + \dim (\mathrm{Im}(\Psi_2|_{\mathfrak h}))
\geq 3.
\end{equation}
Since $\dim (\mathrm{Im}(\Psi_2|_{\mathfrak h}))$ is invariant 
under the action of $H$,
the equality of the second inequality of (\ref{eq:5-1}) holds
if and only if
$T_o(L)$ is contained in subset $H \cdot {\mathfrak m}_{\pi/4}$
or $H \cdot {\mathfrak m}_{3\pi/4}$
of $\tilde G_2(T_o(G/H))$.
Let $\tilde G_2(T(G/H))$ denote the Grassmannian bundle 
over $G/H$ whose fiber at each point $p \in G/H$ is $\tilde G_2(T_p(G/H))$.
Since any point of $L$ can be moved to the origin $o$ by the action of $G$,
we have the following proposition.

\begin{pro} \label{pro:inequality of the Killing nullity}
Let $L$ be a Lagrangian surface with respect to a complex structure
$J_0 \oplus J_0$ on $S^2 \times S^2$.
For any $p \in L$, take $g \in G$ such that $gp=o$.
Then the Killing nullity of $L$ satisfies the inequality
\begin{equation} \label{eq:inequality of the Killing nullity}
\mathrm{nul}_K(L)
= \mathrm{nul}_K(gL)
\geq \mathrm{codim}(gL) + \dim \mathrm{Im}(\Psi_2|_{\mathfrak h})
\geq 3.
\end{equation}
Moreover, the equality condition of the last inequality in
(\ref{eq:inequality of the Killing nullity}) holds
for all $p \in L$ if and only if
the tangent bundle $TL$ of $L$ is contained in the subbundle
$G \cdot {\mathfrak m}_{\pi/4}$ or $G \cdot {\mathfrak m}_{3\pi/4}$
of $\tilde G_2(T(G/H))$.
\end{pro}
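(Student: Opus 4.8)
The plan is to globalize the origin-based computation of $\dim\mathrm{Im}(\Psi_2|_{\mathfrak h})$ carried out just above, using only two structural facts: the isometry invariance of the Killing nullity, and the $H$-invariance of $\dim\mathrm{Im}(\Psi_2|_{\mathfrak h})$ together with the parametrization of Lagrangian planes at $o$ by the orbits $H\cdot{\mathfrak m}_\theta$. The computation of the dimension itself is already done, so the work here is essentially a transport argument from $o$ to an arbitrary $p\in L$.

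First I would record that $\mathrm{nul}_K$ is invariant under the isometry $g$, which gives the first equality. Conjugation by $g$ is an automorphism of the isometry group and hence of $\mathfrak{i}(G/H)$, and $g$ carries $L$ isometrically onto $gL$ and $NL$ onto $N(gL)$; therefore the assignment $Z^{NL}\mapsto (g_{*}Z)^{N(gL)}$ is a well-defined linear isomorphism $\mathfrak{i}(G/H)^{NL}\to\mathfrak{i}(G/H)^{N(gL)}$, so $\mathrm{nul}_K(L)=\mathrm{nul}_K(gL)$. Since $g$ is a holomorphic isometry, $gL$ is again Lagrangian with respect to $J_0\oplus J_0$ and passes through $o=gp$, so Gotoh's inequality (Theorem \ref{thm:Killing nullity}) applies to $gL$ at $o$ and yields the middle inequality $\mathrm{nul}_K(gL)\geq\mathrm{codim}(gL)+\dim\mathrm{Im}(\Psi_2|_{\mathfrak h})$. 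For the final bound I would note that $\mathrm{codim}(gL)=2$, and that every Lagrangian plane at $o$ lies in some orbit $H\cdot{\mathfrak m}_\theta$ with $\theta\in[\pi/4,3\pi/4]$ (this is exactly the Lagrangian segment $\theta_1-\theta_2=\pi/2$ of the fundamental domain $\mathcal C$). The explicit computation gives $\dim\mathrm{Im}(\Psi_2|_{\mathfrak h})\in\{1,2\}$, which combined with its $H$-invariance gives $\dim\mathrm{Im}(\Psi_2|_{\mathfrak h})\geq 1$ for $T_o(gL)$, hence $\mathrm{nul}_K(L)\geq 3$. It is essential here that on $\mathfrak h$ one has $Z_{\mathfrak m}=0$, so $\Psi_2|_{\mathfrak h}(Z)(X)=(\mathrm{ad}(Z)X)^\perp$ depends only on the tangent plane and not on the second fundamental form; this is precisely what lets the origin computation be transferred to the plane $T_o(gL)$.

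For the equality condition, the last inequality is an equality exactly when $\dim\mathrm{Im}(\Psi_2|_{\mathfrak h})=1$ for $T_o(gL)$, which by the computation occurs if and only if $T_o(gL)\in H\cdot{\mathfrak m}_{\pi/4}\cup H\cdot{\mathfrak m}_{3\pi/4}$. Using $T_o(gL)=g_{*}T_pL$ and $gp=o$, I would check the two inclusions $g_{*}^{-1}(H\cdot{\mathfrak m}_{\pi/4})\subset G\cdot{\mathfrak m}_{\pi/4}$ and $g_{*}^{-1}(H\cdot{\mathfrak m}_{3\pi/4})\subset G\cdot{\mathfrak m}_{3\pi/4}$, together with their reverse direction: for $h\in H$ the element $g^{-1}h$ lies in $G$ and sends $o$ to $p$, while conversely any point of the fiber over $p$ of the subbundle $G\cdot{\mathfrak m}_{\pi/4}$ is of the form $(g')_{*}{\mathfrak m}_{\pi/4}$ with $g'o=p$, and then $gg'\in H$ shows $g_{*}(g')_{*}{\mathfrak m}_{\pi/4}\in H\cdot{\mathfrak m}_{\pi/4}$. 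Thus equality at $p$ is equivalent to $T_pL\in G\cdot{\mathfrak m}_{\pi/4}\cup G\cdot{\mathfrak m}_{3\pi/4}$, and requiring equality for all $p$ means the image of $TL$ lies in this union.

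The one point I expect to require the most care — and the main obstacle to a clean statement — is the dichotomy encoded in the word \textbf{or}: one must rule out a section of $TL$ that jumps between the two families from point to point. This I would settle by observing that the two subbundles $G\cdot{\mathfrak m}_{\pi/4}$ and $G\cdot{\mathfrak m}_{3\pi/4}$ are disjoint, since fiberwise they are the $J_0\oplus(-J_0)$-holomorphic and the $J_0\oplus(-J_0)$-anti-holomorphic planes, distinguished by the invariant $\theta_1+\theta_2=0$ versus $\theta_1+\theta_2=\pi$. Because $p\mapsto T_pL$ is continuous into $\tilde G_2(T(G/H))$ and $L$ is connected, the image cannot meet both disjoint closed subbundles, so $TL$ lies entirely in one of them, which is the asserted form of the equality condition.
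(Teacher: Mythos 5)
Your proposal is correct and follows essentially the same route as the paper: translate $p$ to the origin by an isometry, apply Gotoh's inequality there, and invoke the preceding computation that $\dim \mathrm{Im}(\Psi_2|_{\mathfrak h}) \in \{1,2\}$ for a $J_0\oplus J_0$-Lagrangian plane, with value $1$ exactly on the $H$-orbits of ${\mathfrak m}_{\pi/4}$ and ${\mathfrak m}_{3\pi/4}$. Your closing connectedness argument for the ``or'' dichotomy is a detail the paper leaves implicit (and does not actually need afterwards, since both orbits consist of $J_0\oplus(-J_0)$-complex planes), but it is sound because the two orbits are disjoint closed sets separated by the $H$-invariant $\theta_1+\theta_2$.
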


Now we study the condition that the equality of the last inequality of
(\ref{eq:inequality of the Killing nullity}) will be satisfied.
When $\theta = \frac{\pi}{4}$,
we have $\theta_1 + \theta_2 = 0,\ \theta_1 - \theta_2 = \frac{\pi}{2}$.
When $\theta = \frac{3}{4}\pi$,
we have $\theta_1 + \theta_2 = \pi,\ \theta_1 - \theta_2 = \frac{\pi}{2}$.
Hence ${\mathfrak m}_{\frac{\pi}{4}}$ and ${\mathfrak m}_{\frac{3}{4}\pi}$
are Lagrangian subspaces of $\tilde{\mathfrak m} \cong T_o(G/H)$
with respect to $J_0 \oplus J_0$,
and are complex subspaces with respect to $J_0 \oplus (-J_0)$.
Therefore the equality of the last inequality
of (\ref{eq:inequality of the Killing nullity}) holds for all $p \in L$
if and only if $L$ is a complex submanifold of $S^2 \times S^2$
with respect to a complex structure $J_0 \oplus (-J_0)$.
A complex submanifold of a K\"ahler manifold is a calibrated submanifold,
so it is volume minimizing in its homology class,
in particular it is a stable minimal submanifold.
Castro and Urbano \cite{CU} obtained the following result
for stable minimal Lagrangian surfaces in $S^2 \times S^2$.

\begin{thm}[Castro-Urbano \cite{CU}]
The only stable compact minimal Lagrangian surface of $S^2 \times S^2$
is the totally geodesic Lagrangian sphere
$$
{\bf M}_0 := \{ (x,-x) \in S^2 \times S^2 \ | \ x \in S^2 \}.
$$
\end{thm}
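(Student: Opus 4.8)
The plan is to translate stability into a spectral condition and then exploit the ambient embedding $S^2\times S^2\subset\mathbb R^3\times\mathbb R^3=\mathbb R^6$ to force rigidity. First I would record that $(S^2\times S^2,\omega_0\oplus\omega_0,J_0\oplus J_0)$ is K\"ahler--Einstein with Einstein constant $c=1$, since each factor is a unit sphere and hence $\mathrm{Ric}=g$. For a compact minimal Lagrangian $L$, the Lagrangian condition gives a bijection between normal fields $V$ and $1$-forms $\alpha_V=\iota_V\omega|_L$ on $L$, and Oh's second variation formula \cite{Oh90} writes the index form of the volume functional as $\mathcal I(\alpha,\alpha)=\int_L(\langle\Delta_{\mathrm{dR}}\alpha,\alpha\rangle-c\,|\alpha|^2)\,dv$, where $\Delta_{\mathrm{dR}}$ is the Hodge--de Rham Laplacian on $1$-forms. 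Thus $L$ is stable if and only if the lowest eigenvalue of $\Delta_{\mathrm{dR}}$ on $1$-forms is $\ge c$. From this I would read off two facts: testing with harmonic $1$-forms (eigenvalue $0<c$) shows that stability forces $b_1(L)=0$, while testing with exact forms $\alpha=df$ gives $\lambda_1(L)\ge c$ for the first nonzero eigenvalue of the Laplacian on functions.

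Next comes the topological reduction. Since $b_1(L)=0$, the closed surface $L$ is either $S^2$ or $\mathbb{RP}^2$; but Lemma \ref{lem:3-3} forces $\chi(L)$ to be even, which excludes $\mathbb{RP}^2$ (whose Euler characteristic is $1$). Hence $L\cong S^2$.

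The heart of the argument is the spectral rigidity. Let $F=(x,y):L\to\mathbb R^6$ be the restriction of the standard embedding, so $\|F\|^2=\|x\|^2+\|y\|^2=2$ and $F(L)\subset S^5(\sqrt2)$. Using the description of the Lagrangian tangent planes $\mathfrak m_\theta$ given above (every Lagrangian plane is $H$-equivalent to some $\mathfrak m_\theta$), I would verify the balance identity that, for an orthonormal tangent frame $\{e_i\}$ with $e_i=(a_i,b_i)\in T_xS^2\oplus T_yS^2$, one has $\sum_i\|a_i\|^2=\sum_i\|b_i\|^2=1$. Combined with the minimality of $L$ in $S^2\times S^2$, the Beltrami formula then yields $\Delta_L F=c\,F$, so the six coordinate functions are eigenfunctions of $\Delta_L$ with eigenvalue $c$, whence $\lambda_1(L)\le c$. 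Together with $\lambda_1(L)\ge c$ from stability this gives $\lambda_1(L)=c$, and $F$ is assembled entirely from first eigenfunctions. By the classical bound (Cheng) that the first eigenspace of any metric on $S^2$ has dimension at most three, the six components span a space of dimension $\le 3$; being $L^2$-orthogonal to the constants, they confine $F(L)$ to a $3$-dimensional linear subspace $V\subset\mathbb R^6$, so $L=V\cap S^5(\sqrt2)$ is a totally geodesic round $S^2(\sqrt2)$.

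Finally I would identify $L$ with ${\bf M}_0$. The conditions $\|x\|=\|y\|=1$ on $V\cap S^5(\sqrt2)$ force $V$ to meet each factor $\mathbb R^3$ trivially, hence $V$ is the graph of a linear isometry $\Phi\in O(3)$; the Lagrangian condition $\omega_0(v,w)+\omega_0(\Phi v,\Phi w)=0$ then forces $\det\Phi=-1$, so $L=\{(u,\Phi u):u\in S^2\}$. The antipodal choice $\Phi=-\mathrm{id}$ recovers ${\bf M}_0$, and since $SO(3)\times SO(3)$ acts transitively on orientation-reversing $\Phi$ via $\Phi\mapsto g_2\Phi g_1^{-1}$, every such $L$ is congruent to ${\bf M}_0$. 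I expect the main obstacle to lie in the second-variation reduction and the final rigidity: one must justify the precise normalization of Oh's formula and confirm that the harmonic $1$-form directions are genuine volume-decreasing variations (this is what delivers $b_1=0$), and then carry out the equality analysis in the spectral step, where Cheng's multiplicity bound is exactly what upgrades the sharp identity $\lambda_1(L)=c$ into the conclusion that $L$ is totally geodesic.
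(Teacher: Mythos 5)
The paper does not actually prove this statement: it is quoted from Castro--Urbano \cite{CU} and used as a black box (its only role here is to feed Corollary \ref{cor:5-3}). So there is no internal proof to compare against; judged on its own, your proposal is correct and follows a genuinely different, spectral route from the one in \cite{CU}. Oh's second variation formula for a compact minimal Lagrangian in a K\"ahler--Einstein manifold with Einstein constant $c=1$ does read $I(\alpha,\alpha)=\int_L\bigl(\langle\Delta\alpha,\alpha\rangle-|\alpha|^2\bigr)\,dv$, so stability kills harmonic $1$-forms (hence $b_1=0$) and forces $\lambda_1\ge 1$; Lemma \ref{lem:3-3} then leaves only $L\cong S^2$. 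Your balance identity $\sum_i\|a_i\|^2=\sum_i\|b_i\|^2=1$ is a genuine consequence of the Lagrangian condition (cleanest proof: the orthogonal projection $\pi_1$ onto the first factor commutes with $J$ and $JP=P^{\perp}$, so $2\,\mathrm{tr}(\pi_1|_P)=\mathrm{tr}\,\pi_1=2$), and it does upgrade minimality in $S^2\times S^2$ to $\Delta F=F$, i.e.\ minimality in $S^5(\sqrt2)$, giving $\lambda_1\le 1$ and hence equality; Cheng's multiplicity bound then confines $F(L)$ to a $3$-plane $V$, and your identification of $V\cap S^5(\sqrt2)$ as a graph $\{(u,\Phi u)\}$ with $\Phi\in O(3)$, $\det\Phi=-1$, congruent to ${\bf M}_0$, is sound. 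By contrast, Castro and Urbano obtain the genus-zero classification by complex-analytic methods (holomorphicity properties of the two projections) rather than Takahashi--Cheng rigidity; your route trades that classification for Cheng's eigenvalue multiplicity theorem. Two points you should still make explicit: the theorem also asserts that ${\bf M}_0$ \emph{is} stable, which your argument does not address (it follows from the calibration remark preceding the theorem: ${\bf M}_0$ is complex for $J_0\oplus(-J_0)$, hence homologically volume minimizing); and the appeal to Lemma \ref{lem:3-3} uses that $L$ is embedded, which is the standing hypothesis in this paper but should be flagged if one wants the statement for immersed surfaces.
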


Hence, we have

\begin{cor} \label{cor:5-3}
Let $L$ be a compact connected Lagrangian surface in $S^2 \times S^2$
with respect to a complex structure $J_0 \oplus J_0$.
When we move any point of $L$ to the origin $o$,
the inequality
$$
\mathrm{nul}_K(L)
\geq \mathrm{codim}(L) + \dim \mathrm{Im}(\Psi_2|_{\mathfrak h})
\geq 3
$$
is satisfied.
Moreover, the equality of the second inequality in the above formula
holds for all points of $L$ if and only if $L$ is congruent to ${\bf M}_0$.
\end{cor}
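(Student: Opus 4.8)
The plan is to read the statement off from Proposition~\ref{pro:inequality of the Killing nullity} together with the K\"ahler-angle computation carried out just above it and the Castro--Urbano rigidity theorem; almost no new computation is required. The displayed chain of inequalities is precisely Proposition~\ref{pro:inequality of the Killing nullity} applied to the Lagrangian surface $L$: for each $p \in L$ I would choose $g \in G$ with $gp = o$, use that $\mathrm{nul}_K$ is an isometry invariant so that $\mathrm{nul}_K(L) = \mathrm{nul}_K(gL)$, and then apply Gotoh's inequality (Theorem~\ref{thm:Killing nullity}) at the origin to $gL$, whose codimension is $2$ and whose $\dim \mathrm{Im}(\Psi_2|_{\mathfrak h})$ is either $1$ or $2$. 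Thus the only substantive content is the equality statement.

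For the forward direction, suppose the second inequality is an equality at every point of $L$. By the equality condition in Proposition~\ref{pro:inequality of the Killing nullity}, this says that $TL$ is contained in the subbundle $G \cdot {\mathfrak m}_{\pi/4}$ or $G \cdot {\mathfrak m}_{3\pi/4}$ of $\tilde G_2(T(G/H))$. The analysis preceding the statement shows that ${\mathfrak m}_{\pi/4}$ and ${\mathfrak m}_{3\pi/4}$ correspond to $\theta_1 + \theta_2 \in \{0, \pi\}$, hence are exactly the complex lines for the second K\"ahler structure $J_0 \oplus (-J_0)$. Therefore the equality condition is equivalent to every tangent plane of $L$ being $J_0 \oplus (-J_0)$-invariant, i.e.\ to $L$ being a complex submanifold of the K\"ahler manifold $(S^2 \times S^2,\ J_0 \oplus (-J_0))$. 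Being complex, $L$ is calibrated, hence homologically volume minimizing, and in particular a stable compact minimal submanifold; since $L$ is simultaneously Lagrangian with respect to $J_0 \oplus J_0$, it is a stable compact minimal Lagrangian surface, and the Castro--Urbano theorem forces $L$ to be congruent to ${\bf M}_0$.

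For the converse I would verify directly that ${\bf M}_0 = \{ (x, -x) \}$ realizes the equality. Its tangent space at $(x, -x)$ is $\{ (v, -v) \mid v \in T_x S^2 \}$, and since $J_0 v = x \times v$ at $x$ while $(-J_0)(-v) = -(x \times v)$ at $-x$, one computes $(J_0 \oplus (-J_0))(v, -v) = (x \times v,\ -(x \times v))$, again of the form $(u, -u)$ with $u = x \times v \in T_x S^2$. Hence ${\bf M}_0$ is $J_0 \oplus (-J_0)$-complex, so its tangent bundle lies in $G \cdot {\mathfrak m}_{\pi/4}$, and the equality condition of Proposition~\ref{pro:inequality of the Killing nullity} holds at every point.

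The genuinely external inputs are borrowed rather than re-derived: the equality locus of Gotoh's inequality is already pinned down by the K\"ahler-angle computation above, and the passage from a complex (hence calibrated, stable minimal) Lagrangian surface to ${\bf M}_0$ is exactly the Castro--Urbano classification. The one point that deserves care, and which I regard as the crux, is the identification of the two distinguished planes ${\mathfrak m}_{\pi/4}, {\mathfrak m}_{3\pi/4}$ with the $J_0 \oplus (-J_0)$-complex directions; once that is in hand, the corollary is a direct assembly of the preceding results.
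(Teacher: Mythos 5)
Your proposal is correct and follows essentially the same route as the paper: apply Proposition~\ref{pro:inequality of the Killing nullity}, identify the equality locus with the $J_0 \oplus (-J_0)$-complex directions via the K\"ahler-angle computation, and conclude via calibration and the Castro--Urbano classification of stable compact minimal Lagrangian surfaces. Your explicit check that ${\bf M}_0$ is $J_0 \oplus (-J_0)$-complex is a welcome addition that the paper leaves implicit.
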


\section{Classification of Lagrangian surfaces with low\\ Killing nullities}
\setcounter{equation}{0}

In this section, using the inequality in
Proposition \ref{pro:inequality of the Killing nullity},
let us classify Lagrangian surfaces of $S^2 \times S^2$ with
low Killing nullities.

\subsection{The case where $\mathrm{nul}_K(L) = 3$}

Let $L$ be a compact connected Lagrangian surface in $S^2 \times S^2$.
Assume that $\mathrm{nul}_K(L) = 3$.
Then, by Corollary \ref{cor:5-3}, we have
$$
3 = \mathrm{nul}_K(L)
\geq \mathrm{codim}(L) + \dim \mathrm{Im}(\Psi_2|_{\mathfrak h})
\geq 3
$$
for all points of $L$.
Hence, the equality condition of the second inequality holds.
Using Corollary \ref{cor:5-3} again, $L$ must be congruent to
the totally geodesic Lagrangian sphere ${\bf M}_0$.
We can prove that ${\bf M}_0$ is globally tight (see Section 7).

\subsection{The case where $\mathrm{nul}_K(L) = 4$}

Next, assume that $\mathrm{nul}_K(L) = 4$.
Then $L$ cannot be congruent to ${\bf M}_0$.
Therefore, by Corollary \ref{cor:5-3}, there exist $p \in L$ and $g \in G$
such that $gp=o$ and
$$
4 = \mathrm{nul}_K(L)
= \mathrm{nul}_K(gL)
\geq \mathrm{codim}(gL) + \dim \mathrm{Im}(\Psi_2|_{\mathfrak h})
\geq 4.
$$
Hence, the equality condition of the first inequality holds.
By Theorem \ref{thm:Killing nullity},
$L$ is a homogeneous Lagrangian surface in $S^2 \times S^2$.
Here, let us use the following recent result on homogeneous
Lagrangian surfaces in $S^2 \times S^2$.

\begin{thm}[Ma-Ohnita \cite{MO}] \label{thm:Ma-Ohnita}
Let $L$ be a compact homogeneous Lagrangian surface in $S^2 \times S^2$.
Then $L$ must be congruent to either the totally geodesic Lagrangian sphere
$$
{\bf M}_0 = \{ (x, -x) \in S^2 \times S^2 \ | \ x \in S^2 \}
$$
or Lagrangian tori obtained by a product of latitude circles in $S^2$
$$
T_{a,b} := \{ (x, y) \in S^2 \times S^2 \ | \ x_1 = a,\ y_1 = b \} \qquad
(0 \leq a, b < 1).
$$
\end{thm}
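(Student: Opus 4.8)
The statement to be established is the Ma–Ohnita classification of compact homogeneous Lagrangian surfaces $L$ in $S^2\times S^2$: each such $L$ is congruent either to the totally geodesic Lagrangian sphere ${\bf M}_0$ or to one of the product tori $T_{a,b}$. My plan is to exploit homogeneity to reduce the question to an orbit classification under the isometry group $G=SO(3)\times SO(3)$, and then to separate the cases by the orbit dimension and by the isotropy representation. First I would observe that if $L$ is homogeneous, there is a closed subgroup $K\subset G$ acting transitively on $L$, so $L=K\cdot o$ for some point $o\in L$, and $L\cong K/K_o$ where $K_o$ is the isotropy subgroup. Since $L$ is a closed surface, $K$ is compact, and its identity component $K^{\circ}$ already acts with open (hence all of $L$) orbits, so I may replace $K$ by a connected closed subgroup of $G=SO(3)\times SO(3)$ acting transitively on a surface.

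The core step is to enumerate the connected closed subgroups $K$ of $SO(3)\times SO(3)$ whose orbit through a point is a $2$-dimensional Lagrangian submanifold. The Lagrangian condition $\iota^{*}(\omega_0\oplus\omega_0)=0$ is $K$-invariant, so it suffices to impose it at a single point $o$ and then read off which subalgebras $\mathfrak{k}\subset\mathfrak{so}(3)\oplus\mathfrak{so}(3)$ produce a Lagrangian tangent plane. Using the explicit identification of $T_o(S^2\times S^2)$ with $\tilde{\mathfrak m}$ and the action of $J=J_0\oplus J_0$ recorded in Section 5, I would translate ``$\mathfrak{k}\cdot o$ is Lagrangian'' into the algebraic condition that the evaluation $\mathrm{ev}_o(\mathfrak{k})\subset\tilde{\mathfrak m}$ is a $2$-plane on which $\omega$ vanishes. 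Dimension counting then forces $\dim\mathfrak{k}\in\{2,3\}$ (a $2$-torus, or a $3$-dimensional simple factor), and by going through the subalgebras of $\mathfrak{so}(3)\oplus\mathfrak{so}(3)$ — essentially the diagonal $\mathfrak{so}(3)$, the maximal torus $\mathfrak{so}(2)\oplus\mathfrak{so}(2)$, and their conjugates — I would isolate exactly the cases giving a sphere and a torus.

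Concretely, the $3$-dimensional case forces $K^{\circ}$ to be a diagonally embedded $SO(3)$, and imposing that its orbit be Lagrangian (rather than the anti-diagonal giving a complex sphere) singles out the antidiagonal $x\mapsto(x,-x)$, i.e. ${\bf M}_0$; the orbit is then $SO(3)/SO(2)\cong S^2$, totally geodesic and Lagrangian. The $2$-dimensional case forces $K^{\circ}$ to be a maximal torus $SO(2)\times SO(2)$ acting by simultaneous rotations about fixed axes; each factor acts on an $S^2$ with the latitude circles as orbits, so the orbit is a product of latitude circles, which after normalizing the axes by a $G$-motion is exactly $T_{a,b}$. One must also check that no other $2$-dimensional connected subgroup (for instance a ``skew'' torus or a non-maximal one-parameter-times-one-parameter subgroup) yields a new transitive action on a closed surface; this is settled by noting that such a subgroup still lies in a maximal torus of $G$ up to conjugacy and that its generic orbit is again a product of circles, congruent to some $T_{a,b}$.

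The main obstacle I anticipate is the bookkeeping in the orbit enumeration: ensuring the list of connected closed subgroups of $SO(3)\times SO(3)$ with a Lagrangian surface orbit is complete and that conjugate subgroups are not double-counted, and in particular ruling out exotic $2$-dimensional subgroups whose orbits might a priori fail to be products of round circles. I would handle this by working entirely at the Lie-algebra level with the explicit matrices of $\mathfrak g$, $\mathfrak h$, and $\tilde{\mathfrak m}$ from Section 5, reducing the Lagrangian condition to linear-algebraic constraints on $\mathfrak{k}$, and then invoking the standard classification of closed subgroups of $SO(3)$ (tori and the whole group) factor by factor together with the diagonal-embedding analysis. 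Since the present paper cites this as a known theorem of Ma–Ohnita, I would in the write-up defer the full enumeration to \cite{MO} and present only the reduction and the identification of the two resulting orbit types.
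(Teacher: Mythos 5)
The paper offers no proof of this statement: it is imported as a known theorem, and Ma--Ohnita's own argument runs through the identification $S^2\times S^2\cong Q_2(\mathbb{C})$ and the correspondence between Lagrangian submanifolds of complex hyperquadrics and Gauss images of isoparametric hypersurfaces in spheres. Your direct orbit enumeration inside $G=SO(3)\times SO(3)$ is therefore a genuinely different, more elementary and self-contained route for this low-dimensional case, and it does go through: since $\mathfrak{so}(3)$ has no two-dimensional subalgebras, a connected closed subgroup of $G$ with a two-dimensional orbit is, up to conjugacy, a maximal torus $SO(2)\times SO(2)$, a factor $SO(3)\times\{1\}$, a diagonally embedded $SO(3)$ (all automorphisms of $SO(3)$ being inner), or one of these enlarged by a circle; imposing the Lagrangian condition on the tangent plane of the orbit leaves exactly the antidiagonal sphere ${\bf M}_0$ and the product tori $T_{a,b}$. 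What your approach buys is a short, explicit verification; what Ma--Ohnita's buys is the classification for all $Q_n(\mathbb{C})$. Three small repairs are needed before this is watertight. (i) You wrote that the \emph{anti}-diagonal gives a complex sphere; it is the diagonal $\{(x,x)\}$ that is complex, since $\omega_0\oplus\omega_0$ restricts to $2\omega_0$ there, whereas the antidiagonal is Lagrangian because the antipodal map is an anti-symplectic isometry. (ii) The diagonal $SO(3)$ has three-dimensional orbits through every $(p,q)$ with $q\neq\pm p$ (the stabilizer is trivial), so its only surface orbits are the diagonal and antidiagonal spheres; this case must be discarded explicitly. (iii) The assertion that $\dim\mathfrak{k}\in\{2,3\}$ is too quick: a four-dimensional subgroup such as $SO(3)\times SO(2)$ has the two-dimensional orbit $S^2\times\{\mathrm{pt}\}$ through points on the rotation axis, and this must be excluded separately as a complex, non-Lagrangian surface. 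None of these affects the conclusion, and your reduction from a closed transitive subgroup to its identity component (a finite-index normal subgroup whose orbits are open and closed in the connected surface $L$) is correct.
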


Note that $\mathrm{nul}_K({\bf M}_0) = 3,\ \mathrm{nul}_K(T_{a,b}) = 4$.
Therefore, Theorem \ref{thm:Ma-Ohnita} implies that 
the Lagrangian surface $L$ must be congruent to $T_{a,b}$.
It is clear that $T_{a,b}$ is tight and, especially, 
the totally geodesic Lagrangian torus ${\bf T} := T_{0,0}$ is globally tight.
\footnote{The symbols ${\bf M}_0, T_{a,b}$ and ${\bf T}$
were introduced in Castro and Urbano's paper \cite{CU}.}

\bigskip

Thus we finish the proof of Theorem \ref{thm:1-2}.

\section{Global tightness of the Lagrangian surface ${\bf M}_0$}

In this section, we give a proof of the following theorem.
This completes the proof of Corollary \ref{cor:1-3}.

\begin{thm} \label{thm:main}
The totally geodesic Lagrangian
sphere ${\bf M}_0$ in $S^2 \times S^2$ is globally tight.
\end{thm}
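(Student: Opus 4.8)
The plan is to convert the intersection problem into a fixed-point count in $SO(3)$. First I would record that $\mathbf{M}_0 \cong S^2$, so that $SB(\mathbf{M}_0,\mathbb{Z}_2) = b_0 + b_1 + b_2 = 2$; the entire content of the theorem is then that any transverse translate $g \cdot \mathbf{M}_0$ meets $\mathbf{M}_0$ in \emph{exactly} two points. I would also dispose of the issue of which isometries to consider: the full group of holomorphic isometries of $S^2 \times S^2$ is generated by $G = SO(3)\times SO(3)$ together with the factor swap $(x,y) \mapsto (y,x)$, and this swap preserves $\mathbf{M}_0$ (it sends $(x,-x)$ to $(-x,x) = (y,-y)$ with $y = -x$). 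Hence every translate $h \cdot \mathbf{M}_0$ equals $g \cdot \mathbf{M}_0$ for some $g \in G$, and it suffices to treat $g = (A,B)$ with $A,B \in SO(3)$.

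For such $g$ we have $g \cdot \mathbf{M}_0 = \{ (Ax, -Bx) \mid x \in S^2 \}$, and a point $(y,-y) \in \mathbf{M}_0$ lies in $g \cdot \mathbf{M}_0$ precisely when there is $x \in S^2$ with $Ax = y$ and $Bx = y$, that is, when $x$ is fixed by $C := B^{-1}A \in SO(3)$. Thus
\begin{equation*}
\mathbf{M}_0 \cap g \cdot \mathbf{M}_0 \;\longleftrightarrow\; \{ x \in S^2 \mid Cx = x \}.
\end{equation*}
The key step is to check that the transversality assumption is exactly what makes $C$ a nontrivial rotation. At an intersection point with fixed vector $x_0$ and $y = Ax_0 = Bx_0$, the two tangent planes inside $y^\perp \oplus y^\perp$ are
\begin{equation*}
T_{(y,-y)}\mathbf{M}_0 = \{ (v,-v) \mid v \in y^\perp \}, \qquad T_{(y,-y)}(g\cdot\mathbf{M}_0) = \{ (Aw,-Bw) \mid w \in x_0^\perp \},
\end{equation*}
and these span the $4$-dimensional tangent space iff they intersect trivially, i.e. iff $Aw = Bw$ forces $w=0$ for $w \in x_0^\perp$. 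Since $C$ fixes $x_0$, it restricts to a rotation of the plane $x_0^\perp$, and this restriction has no nonzero fixed vector exactly when $C \neq \mathrm{Id}$.

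Combining these observations finishes the argument. If $C = \mathrm{Id}$ then $A = B$ and $g \cdot \mathbf{M}_0 = \mathbf{M}_0$, so the intersection is not transverse and is excluded by hypothesis. If $C \neq \mathrm{Id}$, then $C$ is a nontrivial rotation of $\mathbb{R}^3$ about a unique axis, whose only fixed points on $S^2$ are the antipodal pair $\pm x_0$; by the tangent-space computation each is a transverse intersection point, giving $\#(\mathbf{M}_0 \cap g\cdot\mathbf{M}_0) = 2 = SB(\mathbf{M}_0,\mathbb{Z}_2)$. I expect the one step requiring genuine care to be this transversality bookkeeping — identifying $T(g\cdot\mathbf{M}_0)$ with $\{(Aw,-Bw)\}$ and matching ``transverse at $x_0$'' with ``$C|_{x_0^\perp}$ has no eigenvalue $1$'' — together with the clean verification that a nontrivial element of $SO(3)$ has precisely two fixed points on $S^2$ for every nonzero rotation angle, including the case of rotation by $\pi$.
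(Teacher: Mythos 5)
Your proposal is correct, but it proves the theorem by a genuinely different and much more elementary route than the paper. The paper's argument is indirect: it combines Howard's Poincar\'e formula (integral geometry), which gives
$\int_G \#({\bf M}_0 \cap g{\bf M}_0)\,d\mu(g) = 2\,\mathrm{vol}(G)$,
with the Arnold--Givental inequality $\#({\bf M}_0 \cap \rho({\bf M}_0)) \geq SB({\bf M}_0,\mathbb{Z}_2)=2$ coming from Oh's Floer-theoretic work; if some transverse translate had more than two intersection points, the count would exceed $2$ on an open set of $G$ and the average would be too large. You instead compute the intersection exactly: identifying ${\bf M}_0 \cap (A,B)\cdot{\bf M}_0$ with the fixed-point set of $C=B^{-1}A$ on $S^2$, noting that a nontrivial element of $SO(3)$ has eigenvalue $1$ with multiplicity exactly one (so precisely two antipodal fixed points, including the rotation-by-$\pi$ case), and matching transversality with $C\neq\mathrm{Id}$ via the tangent-space computation $T(g\cdot{\bf M}_0)=\{(Aw,-Bw)\}$. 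Your check that the factor swap preserves ${\bf M}_0$, so that the full holomorphic isometry group reduces to $G=SO(3)\times SO(3)$, is a point the paper does not even address. Your approach buys a stronger conclusion for free --- every translate other than ${\bf M}_0$ itself meets ${\bf M}_0$ transversely in exactly two points --- and avoids both the kinematic formula and Floer theory; the paper's method, on the other hand, is the one that generalizes to real forms in Hermitian symmetric spaces where the intersections cannot be enumerated by hand, and it yields the lower bound for arbitrary Hamiltonian perturbations rather than just isometric translates.
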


\begin{rem} \rm
The fact that the totally geodesic Lagrangian submanifold
$\mathbb RP^n \subset \mathbb CP^n$ is globally tight
has been proven by Howard \cite{Howard} using a different method.
\end{rem}

First, we shall review the generalized Poincar\'e formula in Riemannian
homogeneous spaces obtained by Howard \cite{Howard}.

Let $U$ be a finite dimensional real vector space with an inner product,
and $V$ and $W$ vector subspaces in $U$.
Take orthonormal bases $v_1, \cdots, v_p$ of $V$ and $w_1, \cdots, w_q$
of $W$.
The angle $\sigma(V,W)$ between $V$ and $W$ is defined by
$$
\sigma(V,W)
= \| v_1 \wedge \cdots \wedge v_p \wedge w_1 \wedge \cdots \wedge w_q \|.
$$

Let $G$ be a Lie group equipped with a left invariant Riemannian metric
and $K$ a closed subgroup of $G$.
Moreover, we assume that the metric on $G$ is biinvariant on $K$.
Then, for a subspace $V$ of $T_x(G/K)$ and a subspace $W$ of $T_y(G/K)$,
we take $g_x, g_y \in G$ satisfying $g_x o = x$ and $g_y o = y$.
We define the angle $\sigma_K(V,W)$ between $V$ and $W$ by
\begin{equation}\label{eq:7-1}
\sigma_K(V,W)
= \int_K \sigma( (dg_x)_o^{-1} V, (dk)_o^{-1} (dg_y)_o^{-1} W ) d\mu(k).
\end{equation}

\begin{thm}[Howard \cite{Howard}, Poincar\'{e} formula] \label{th:H}
Let $G/K$ be a Riemannian homogeneous space and assume that $G$ is unimodular.
Let $M$ and $N$ be submanifolds of $G/K$ with $\dim (G/K) \leq \dim M + \dim N$.
Then we have
$$
\int_G \mbox{vol}(M \cap gN) d\mu(g)
= \int_{M \times N} \sigma_K(T_x^\perp M, T_y^\perp N) d\mu(x,y).
$$
\end{thm}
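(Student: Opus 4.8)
The plan is to prove the formula by the classical double-fibration (incidence) method together with the coarea formula, extracting $\sigma_K$ as the fibrewise average of the elementary angle $\sigma$ exactly as in its definition (\ref{eq:7-1}). Write $n = \dim(G/K)$, $p = \dim M$, $q = \dim N$, $k = \dim K$, and let $o = eK$. First I would form the incidence manifold
\[
\mathcal I = \{ (x,y,g) \in M \times N \times G \ | \ gy = x \},
\]
a smooth manifold of dimension $p+q+k$ carrying two natural projections $\pi_1 : \mathcal I \to G$, $(x,y,g) \mapsto g$, and $\pi_2 : \mathcal I \to M \times N$, $(x,y,g) \mapsto (x,y)$. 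The fibre of $\pi_1$ over $g$ is $\{(x, g^{-1}x) \ | \ x \in M \cap gN\} \cong M \cap gN$, while the fibre of $\pi_2$ over $(x,y)$ is the coset $\{ g \in G \ | \ gy = x \} = g_x K g_y^{-1} \cong K$. The projection $\pi_2$ is a fibre bundle with fibre $K$, and, because $\dim M + \dim N \geq n$, the projection $\pi_1$ is a submersion over the open dense set of $g$ for which $M$ and $gN$ meet transversally, with fibre dimension $p+q-n$.

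The idea is to evaluate a single integral $\int_{\mathcal I} \Theta \, d\mathrm{vol}_{\mathcal I}$ in two ways, via the coarea formula along $\pi_1$ and along $\pi_2$, for a density $\Theta$ chosen so that the two resulting fibre integrals become the two sides of the claimed identity. Coarea along $\pi_1$ gives $\int_G \big( \int_{\pi_1^{-1}(g)} \Theta / J\pi_1 \big) \, d\mu(g)$, and choosing $\Theta$ to cancel $J\pi_1$, together with the Jacobian comparing the fibre metric of $\pi_1^{-1}(g)$ induced from $\mathcal I$ with the metric induced on $M \cap gN$ from $G/K$, turns the inner integral into $\mathrm{vol}(M \cap gN)$, producing the left-hand side. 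Coarea along $\pi_2$ gives $\int_{M \times N} \big( \int_{\pi_2^{-1}(x,y)} \Theta / J\pi_2 \big) \, d\mu(x,y)$, whose inner integral runs over a copy of $K$; for the same $\Theta$ this becomes $\int_K (J\pi_1 / J\pi_2)(\cdots) \, d\mu(k)$ up to the metric factors above. Tracking the discrepancy between the fibre metric and the submanifold metric is a bookkeeping point that must be handled carefully but introduces no essential difficulty.

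The heart of the proof, and the step I expect to be the main obstacle, is the pointwise linear-algebra computation of the normal Jacobians $J\pi_1$ and $J\pi_2$ at an incidence point $(x,y,g)$ with $gy = x$. Working in a left-invariant frame and decomposing $T_g G$ into the directions tangent to the fibres of $\pi_1$ and of $\pi_2$, I would show that the ratio of the two Jacobians equals the elementary angle $\sigma\big( (dg_x)_o^{-1} T_x^\perp M, \ (dk)_o^{-1}(dg_y)_o^{-1} T_y^\perp N \big)$ between the transported normal spaces. Here transversality of $M$ and $gN$ at $x$ corresponds precisely to nonvanishing of this angle, since $\sigma(V^\perp, W^\perp) \neq 0$ if and only if $V + W = T_x(G/K)$. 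The wedge-product definition of $\sigma$ is exactly what records this Jacobian ratio, so the angle function emerges from the computation rather than being inserted by hand.

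Finally, integrating this pointwise identity over the $\pi_2$-fibre $g_x K g_y^{-1} \cong K$ reproduces exactly the average $\int_K \sigma(\cdots) \, d\mu(k)$ defining $\sigma_K(T_x^\perp M, T_y^\perp N)$ in (\ref{eq:7-1}). At this stage the hypothesis that $G$ is unimodular enters: it guarantees that $d\mu(g)$ is bi-invariant, so that the $K$-fibre integral carries a fixed normalization independent of $(x,y)$ and is compatible with the global Haar integral arising after applying coarea to $\pi_1$. Equating the two evaluations of $\int_{\mathcal I} \Theta \, d\mathrm{vol}_{\mathcal I}$ then yields $\int_G \mathrm{vol}(M \cap gN) \, d\mu(g) = \int_{M \times N} \sigma_K(T_x^\perp M, T_y^\perp N) \, d\mu(x,y)$, as claimed.
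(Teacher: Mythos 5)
The paper gives no proof of this theorem at all—it is quoted directly from Howard's memoir \cite{Howard}, whose original argument is exactly the double-fibration strategy you outline: the incidence manifold $\{(x,y,g) : gy=x\}$, the coarea formula along the two projections, the identification of the ratio of normal Jacobians with the angle $\sigma$ of the transported normal spaces, and unimodularity of $G$ to transfer the fibre integral over $g_x K g_y^{-1}$ to $\int_K d\mu(k)$. Your proposal is therefore correct in approach and matches the source proof, with the understood caveat that the central Jacobian-ratio computation is identified as the key lemma rather than carried out in detail.
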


Here we apply Theorem \ref{th:H} in the case of $S^2 \times S^2$ and
calculate the integration of intersection numbers $\#(M \cap gN)$
when $M=N={\bf M}_0$.

Let us put
$o := (e_1, -e_1) \in S^2 \times S^2 \subset \mathbb R^3 \times \mathbb R^3$.
Note that $o \in {\bf M}_0$.
The tangent space of $S^2 \times S^2$ at $o$ is given by
$$
T_o(G/K) = T_o(S^2 \times S^2) =T_{e_1}(S^2) \oplus T_{-e_1}(S^2)
$$
and $\{ (e_2, 0), (e_3, 0), (0, e_2), (0, e_3) \}$ forms an orthonormal
basis of $T_o(G/K)$.
Moreover, since ${\bf M}_0$ is a homogeneous submanifold,
for any $\mathbf{x} \in {\bf M}_0$, there exists $g \in G$ such that
$$
u_1 := \frac{1}{\sqrt{2}}(e_2, -e_2), \qquad
u_2 := \frac{1}{\sqrt{2}}(e_3, -e_3)
$$
is an orthonormal basis of $(dg)^{-1}_o(T_{\mathbf x}{\bf M}_0)$ and
$$
v_1 := \frac{1}{\sqrt{2}}(e_2, e_2), \qquad
v_2 := \frac{1}{\sqrt{2}}(e_3, e_3)
$$
is an orthonormal basis of $(dg)^{-1}_o(T_{\mathbf x}^\perp{\bf M}_0)$.

Then, from (\ref{eq:7-1}), we have
$$
\sigma_K (T_{\mathbf x}^\perp {\bf M}_0, T_{\mathbf y}^\perp {\bf M}_0)
= \int_K \| v_1 \wedge v_2 \wedge k^{-1}(v_1 \wedge v_2) \| d\mu(k).
$$
By the Hodge $*$-operator, we have
$$
\sigma_K (T_{\mathbf x}^\perp {\bf M}_0, T_{\mathbf y}^\perp {\bf M}_0)
= \int_K | \langle u_1 \wedge u_2, k^{-1}(v_1 \wedge v_2) \rangle | d\mu(k).
$$
Since $K = SO(2) \times SO(2)$, we can put
$$
a = \left( \left[ \begin{array}{cc} \cos\phi & -\sin\phi \\
\sin\phi & \cos\phi \end{array} \right],
\left[ \begin{array}{cc} 1 & 0 \\
0 & 1 \end{array} \right] \right),
\quad
b = \left( \left[ \begin{array}{cc} 1 & 0 \\
0& 1 \end{array} \right],
\left[ \begin{array}{cc} \cos\psi & -\sin\psi \\
\sin\psi & \cos\psi \end{array} \right] \right)
$$
and $k = b^{-1}a$. Then
$$
\sigma_K (T_{\mathbf x}^\perp {\bf M}_0, T_{\mathbf y}^\perp {\bf M}_0)
= \int_0^{2 \pi} \int_0^{2 \pi}
| \langle a(u_1 \wedge u_2), b(v_1 \wedge v_2) \rangle | d\phi\, d\psi.
$$
Here, by a direct calculation, we obtain
$$
\langle a(u_1 \wedge u_2), b(v_1 \wedge v_2) \rangle
= \frac{1}{2} (1 - \cos(\phi + \psi)).
$$
Hence we have
$$
\sigma_K (T_{\mathbf x}^\perp {\bf M}_0, T_{\mathbf y}^\perp {\bf M}_0)
= \frac{1}{2} \int_0^{2 \pi} \int_0^{2 \pi}
| 1 - \cos(\phi + \psi) | d\phi\, d\psi
= 2 \pi^2.
$$
Therefore Theorem \ref{th:H} yields
$$
\int_G \#({\bf M}_0 \cap g{\bf M}_0) d\mu(g)
= \int_{{\bf M}_0 \times {\bf M}_0} \sigma_K(T_{\mathbf x}^\perp {\bf M}_0,
T_{\mathbf y}^\perp {\bf M}_0) d\mu(x, y)
= 2 \pi^2 \mathrm{vol}({\bf M}_0)^2.
$$
Since
$$
\mathrm{vol}({\bf M}_0) = 2 \mathrm{vol}(S^2(1)) = 2 \cdot 4 \pi = 8 \pi,
$$
we have
\begin{equation} \label{eq:7-3}
\int_G \#({\bf M}_0 \cap g{\bf M}_0) d\mu(g) = 128 \pi^4.
\end{equation}

Here we review the Arnold-Givental inequality for real forms in Hermitian
symmetric spaces.

\begin{thm}[Oh \cite{Oh93}, \cite{Oh95}, \cite{Oh952}] \label{thm:Oh}
Let $G/K$ be a compact Hermitian symmetric space and $L$ be a real form
of $G/K$.
Assume that the minimal Maslov number of $L$ is greater than or equal to 2.
Then for any Hamiltonian diffeomorphism $\rho \in \mathrm{Ham}(G/K)$ of $G/K$
such that $L$ and $\rho(L)$ intersect transversely, the inequality
$$
\#(L \cap \rho (L)) \geq
SB(L, \mathbb Z_2)
$$
holds.
\end{thm}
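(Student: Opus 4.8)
The plan is to prove the inequality by Lagrangian Floer homology, exploiting both the monotonicity of real forms and the anti-holomorphic involution that cuts them out.

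First I would record the geometric input. A real form $L$ of a compact Hermitian symmetric space $G/K$ is the fixed-point set $L = \mathrm{Fix}(\tau)$ of an anti-holomorphic isometric involution $\tau$, so that $\tau^{*}\omega = -\omega$, and $L$ is a compact totally geodesic \emph{monotone} Lagrangian submanifold. Since the minimal Maslov number $\Sigma_L$ satisfies $\Sigma_L \geq 2$ by hypothesis, Oh's construction of Floer homology for monotone Lagrangians applies: for the pair $(L, \rho(L))$ with $\rho \in \mathrm{Ham}(G/K)$ chosen so that the intersection is transverse, the Floer chain complex $CF(L, \rho(L); \mathbb{Z}_2)$ is generated over $\mathbb{Z}_2$ by the points of $L \cap \rho(L)$, and the boundary operator $\partial$, counting $\mathbb{Z}_2$-classes of Maslov-index-one pseudo-holomorphic strips, satisfies $\partial^{2} = 0$. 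The hypothesis $\Sigma_L \geq 2$ is exactly what rules out the index-one disk bubbling that would otherwise obstruct $\partial^{2} = 0$.

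Next I would reduce to the undeformed Lagrangian. Because $\rho$ is Hamiltonian and $L$ is monotone with $\Sigma_L \geq 2$, Floer homology is invariant under Hamiltonian isotopy, giving
\[
HF(L, \rho(L); \mathbb{Z}_2) \;\cong\; HF(L, L; \mathbb{Z}_2).
\]
The crux is then to compute the right-hand side and show it is as large as the total $\mathbb{Z}_2$-homology of $L$. Here the real structure enters. Working in a $C^2$-small (Morse--Bott) model, in which the low-action generators reproduce a Morse complex of $L$ and the classical differential recovers $H_{*}(L;\mathbb{Z}_2)$, I would use a $\tau$-anti-invariant Hamiltonian and a $\tau$-compatible almost complex structure so that $\tau$, together with the reflection $(s,t)\mapsto(-s,1-t)$ of the strip $\mathbb{R}\times[0,1]$, acts on the moduli spaces of Floer trajectories. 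An index computation shows this action is free on the zero-dimensional moduli spaces governing the Maslov-positive (quantum) differentials, so their contributions cancel in pairs over $\mathbb{Z}_2$; hence the higher differentials of the Oh spectral sequence vanish, the sequence degenerates, and
\[
HF(L, L; \mathbb{Z}_2) \;\cong\; H_{*}(L; \mathbb{Z}_2).
\]

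Finally, assembling the pieces yields
\[
\#(L \cap \rho(L)) = \dim_{\mathbb{Z}_2} CF(L, \rho(L); \mathbb{Z}_2) \geq \dim_{\mathbb{Z}_2} HF(L, \rho(L); \mathbb{Z}_2) = \dim_{\mathbb{Z}_2} H_{*}(L; \mathbb{Z}_2) = SB(L, \mathbb{Z}_2),
\]
which is the asserted inequality. The main obstacle is the computation $HF(L,L;\mathbb{Z}_2) \cong H_{*}(L;\mathbb{Z}_2)$, and especially the borderline case $\Sigma_L = 2$, where Maslov-index-two holomorphic disks can a priori contribute to the differential. The anti-symplectic involution is precisely the device that tames these contributions, and verifying the needed transversality together with the freeness of the $\tau$-action on the relevant moduli spaces is where the real work lies.
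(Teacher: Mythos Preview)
The paper does not prove this theorem at all: it is stated as a known result due to Oh (with references \cite{Oh93}, \cite{Oh95}, \cite{Oh952}) and is then used as a black box in the proof of Theorem~\ref{thm:main}. So there is no ``paper's own proof'' to compare your attempt against.

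That said, your outline is a faithful sketch of Oh's original argument via Lagrangian Floer homology: monotonicity of the real form together with $\Sigma_L \geq 2$ to guarantee $\partial^2 = 0$, Hamiltonian invariance to reduce to $HF(L,L;\mathbb{Z}_2)$, and the anti-holomorphic involution $\tau$ to kill the quantum contributions and obtain $HF(L,L;\mathbb{Z}_2) \cong H_*(L;\mathbb{Z}_2)$. The delicate point you correctly flag---transversality and the $\mathbb{Z}_2$-cancellation in the borderline case $\Sigma_L = 2$---is exactly what Oh's papers (and the later corrections/addenda) address. For the purposes of this paper, however, none of that machinery needs to be reproduced; the theorem is simply quoted.
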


Since the minimal Maslov number of ${\bf M}_0 \subset S^2 \times S^2$ is
greater than or equal to 2, the assumption of the above theorem is satisfied.

\begin{proof}[Proof of Theorem \ref{thm:main}]
Assume that ${\bf M}_0 \subset S^2 \times S^2$ is not
globally tight.
Then, there exists $g_0 \in G$ such that ${\bf M}_0$ and $g_0{\bf M}_0$
intersect transversely and
$$
\#({\bf M}_0 \cap g_0{\bf M}_0) \geq SB({\bf M}_0, \mathbb Z_2) + 1.
$$
Then, there exists an open neighborhood $U$ of $g_0$ in $G$ satisfying
$$
\#({\bf M}_0 \cap g{\bf M}_0) \geq SB({\bf M}_0, \mathbb Z_2) + 1
$$
for all $g \in U$.
By equality (\ref{eq:7-3}) and Theorem \ref{thm:Oh}, we have
\begin{eqnarray*}
128 \pi^4 &=& \int_G \#({\bf M}_0 \cap g{\bf M}_0) d\mu(g) \\
&=& \int_{G \backslash U} \#({\bf M}_0 \cap g{\bf M}_0) d\mu(g)
+ \int_U \#({\bf M}_0 \cap g{\bf M}_0) d\mu(g) \\
& \geq & \int_{G \backslash U} SB({\bf M}_0, \mathbb Z_2) d\mu(g)
+ \int_U (SB({\bf M}_0, \mathbb Z_2) + 1) d\mu(g) \\
&=& \int_G SB({\bf M}_0, \mathbb Z_2) d\mu(g) + \int_U d\mu(g) \\
&>& SB({\bf M}_0, \mathbb Z_2) \mathrm{vol}(G)
\end{eqnarray*}
Since $SB({\bf M}_0, \mathbb Z_2) = SB(S^2, \mathbb Z_2) = 2$ and
$$
\mathrm{vol}(G) = \mathrm{vol}(SO(3))^2 = (8 \pi^2)^2 = 64 \pi^4,
$$
we have
$$
128 \pi^4 > SB({\bf M}_0, \mathbb Z_2) \mathrm{vol}(G) = 128 \pi^4.
$$
This is a contradiction.
Therefore, ${\bf M}_0 \subset S^2 \times S^2$ is globally tight.
\end{proof}

\section*{Acknowledgements}

We would like to thank Professor Yoshihiro Ohnita for informing us
about his recent work \cite{MO} on the classification of
homogeneous Lagrangian submanifolds in complex hyperquadrics,
which is used in \S 6.2.

\noindent
Hiroshi Iriyeh\\
{\sc School of Science and Technology for Future Life\\
Tokyo Denki University\\
2-2 Kanda-Nishiki-Cho, Chiyoda-Ku\\
Tokyo, 101-8457, Japan}

{\it E-mail address} : {\tt hirie@im.dendai.ac.jp}

\vspace{5mm}

\noindent
Takashi Sakai\\
{\sc Graduate School of Science\\
Osaka City University\\
3-3-138, Sugimoto, Sumiyoshi-ku\\
Osaka, 558-8585, Japan}

{\it E-mail address} : {\tt tsakai@sci.osaka-cu.ac.jp}

\end{document}